\renewcommand{\(}{\left(}
\renewcommand{\)}{\right)}
\renewcommand{\[}{\left[}
\renewcommand{\]}{\right]}
\renewcommand{\a}{\mathbf{a}}
\renewcommand{\S}{\mathbf{S}}
\newcommand{\y}{\mathbf{y}}
\newcommand{\z}{\mathbf{z}}
\newcommand{\w}{\mathbf{w}}
\newcommand{\Z}{\mathbf{Z}}
\newcommand{\x}{\mathbf{x}}
\newcommand{\I}{\mathbf{I}}
\renewcommand{\t}{\mathbf{t}}
\newcommand{\T}{\mathbf{T}}
\newcommand{\X}{\mathbf{X}}
\newcommand{\Y}{\mathbf{Y}}
\newcommand{\Tr}[1]{{\rm{Tr}}\left(#1\right)}
\newcommand{\End}[1]{{\rm{End}}}
\renewcommand{\det}[1]{\left|#1\right|}
\newcommand{\var}[1]{{\rm{var}}\(#1\)}
\newtheorem{lemma}{Lemma}
\newtheorem{definition}{Definition}
\newtheorem{theorem}{Theorem}
\newtheorem{prop}{Proposition}
\newtheorem{corollary}{Corollary}
\newtheorem{rem}{Remark}
\newcommand{\norm}[1]{\left\lVert#1\right\rVert}
\DeclarePairedDelimiter{\floor}{\lfloor}{\rfloor}
\begin{document}
\title{A Robust Test for Elliptical Symmetry}
\author{
    \IEEEauthorblockN{Ilya Soloveychik} \\
    \IEEEauthorblockA{\normalsize The Hebrew University of Jerusalem}
}

\maketitle

\begin{abstract}
Most signal processing and statistical applications heavily rely on specific data distribution models. The Gaussian distributions, although being the most common choice, are inadequate in most real world scenarios as they fail to account for data coming from heavy-tailed populations or contaminated by outliers. Such problems call for the use of Robust Statistics. The robust models and estimators are usually based on elliptical populations, making the latter ubiquitous in all methods of robust statistics. To determine whether such tools are applicable in any specific case, goodness-of-fit (GoF) tests are used to verify the ellipticity hypothesis. Ellipticity GoF tests are usually hard to analyze and often their statistical power is not particularly strong. In this work, assuming the true covariance matrix is unknown we design and rigorously analyze a robust GoF test consistent against all alternatives to ellipticity on the unit sphere. The proposed test is based on Tyler's estimator and is formulated in terms of easily computable statistics of the data. For its rigorous analysis, we develop a novel framework based on the exchangeable random variables calculus introduced by de Finetti. Our findings are supported by numerical simulations comparing them to other popular GoF tests and demonstrating the significantly higher statistical power of the suggested technique.


\end{abstract}

\begin{IEEEkeywords}
Goodness-of-fit test, elliptical population, Tyler's estimator, robust statistics, exchangeable random variables.
\end{IEEEkeywords}

\section{Introduction}
The majority of methods and techniques used by statistical signal processing and data science heavily rely on various assumptions on the data such as independence of the samples, certain parametric families of possible distributions, etc. Very rarely these assumptions are confirmed on the observed samples and even if such verification attempt is made the data almost never agrees with the assumptions made. This leads to poor inference, or even to situations in which the researcher does not know the quality of the achieved results. The main reason for the lack of such tests is the technical complexity of their analysis especially when the data is far from being Gaussian. The most popular substitute for Gaussian distributions in such cases is the family of elliptical populations. Indeed, the latter is already quite flexible to reasonably approximate the heavy tails of the real-world populations or the outliers, while still allowing rigorous analytical treatment. In this paper, we focus on one of the fundamental questions accompanying any problem of signal processing or statistical inference. Namely, how reliably does the elliptical family of distributions model the data at hand. In other words, we design a novel and easy to use Goodness-of-Fit (GoF) test that efficiently and consistently validates the ellipticity assumptions. Given independently sampled data, such a test quantitatively and reliably determines whether we can assume the data to be elliptically distributed. Since an elliptical distribution is determined by its density generator function and scatter matrix\footnote{We assume populations have zero mean, which is a very natural assumption in most applications.}, estimation of the scatter (or covariance) matrix of the population becomes a prerequisite for almost any ellipticity GoF test. Below we give a detailed exposition of the covariance estimation problem in such scenarios, and provide a detailed explanation of our construction.


\subsection{Tests for Elliptical Symmetry}
\label{sec:other_tests_intro}



Numerous ellipticity GoF tests have been proposed in the statistical and signal processing literature, however, most of them lack statistical power as discussed in Section \ref{sec:comparison} below in detail. Such tests are also rarely supported by provable analysis, since such analysis often becomes infeasible when the Gaussian assumptions are lifted. 
In addition, the computational schemes of some of these testing procedures are so complex that their usage becomes computationally infeasible even on modern machines.

In \cite{beran1979testing} Beran introduces a test based on marginal signs and ranks. That test is neither distribution-free within the family of elliptical populations nor affine-invariant. In addition, the authors do not provide practical guidelines to the choice of the basis functions involved in the test statistic making its application difficult. Baringhaus \cite{baringhaus1991testing} proposes a test for spherical symmetry of Cramer-von Mises type based on the independence between the norm of the samples and their directions. The asymptotic distribution of this test is very hard to achieve and exploit. Dyckerhoff et al. \cite{dyckerhoff2015depth} demonstrated empirically that this test can be used as a test for elliptical symmetry in dimension $2$. Koltchinskii and Sakhanenko \cite{koltchinskii2000testing} design tests using the bootstrap methodology. These tests are based on a class of functions closed under orthogonal transformations and have no known asymptotic distribution, thus requiring bootstrap to get the critical values. Manzotti et al. \cite{manzotti2002statistic} develop a test based on spherical harmonics to test whether the normalized vectors are uniformly distributed on the unit sphere. The test is computationally demanding and requires moments of order 4. Schott \cite{schott2002testing} builds a Wald-type test to compare the empirical  fourth-order moments with the expected ones under elliptical symmetry. The test is relatively simple in usage since it is based on the moments of low order. However, it has very low power against several alternatives. The authors of \cite{huffer2007test} propose a Pearson $\chi^2$-type test with multidimensional cells. Its asymptotic distribution exists only for Gaussian scenario, otherwise bootstrap techniques are required. Cassart et al. \cite{cassart2008optimal} construct a pseudo-Gaussian test that is most efficient against a multivariate form of Fechner-type asymmetry. The test requires finite moments of order $4$. Tests based on Monte Carlo simulations can be found in \cite{diks1999test} and \cite{zhu2000nonparametric}. The authors of \cite{li1997some} exploit graphical methods while \cite{zhu2003conditional} build conditional tests. We refer the reader to \cite{serfling2006multivariate} and \cite{sakhanenko2008testing} for extensive surveys and performance analysis of the aforementioned tests. 

\subsection{Covariance Estimation}
Elliptical GoF tests are almost never possible without explicit (e.g. in the plug-in form or through whitening the data) or implicit (e.g. spherical/isotropic GoF tests) estimation or stipulation of the covariance structure. In all of these tests, whenever the scatter matrix is unknown it must be estimated from the available data. In this section we provide a brief survey of the non-Gaussian covariance estimation literature and focus attention on Tyler's estimator which is later used in our construction.

Covariance estimation is a fundamental problem of its own in multivariate statistical analysis. It arises in diverse applications such as signal processing \cite{kelly1986adaptive, soloveychik2014groups}, geometric functional analysis and computational geometry \cite{adamczak2010quantitative}, genomics \cite{schafer2005shrinkage, xie2003covariance, hero2012hub}, functional MRI \cite{derado2010modeling}, modern social networks analysis \cite{lauritzen1996graphical, banerjee2008model}, empirical finance \cite{bai2011estimating, ledoit2003improved}, classical problems of clustering and discriminant analysis  \cite{friedman1989regularized}, and many other fields. Application of structured covariance matrices instead of Bayesian classifiers based on Gaussian mixture densities or kernel densities proved to be very efficient for many pattern recognition tasks, among them speech recognition, machine translation and object recognition \cite{dahmen2000structured}.


As mentioned earlier, in most real world applications the Gaussian models become unacceptable and robust covariance estimation methods that allow the populations to be heavy-tailed or contain a small proportion of outliers are required \cite{huber1964robust, maronna1976robust}. 
In the 70-s through the analysis of elliptical populations and their Maximum Likelihood (ML) estimates, R. A. Maronna discovered a family of covariance $M$-estimators \cite{maronna1976robust}. These estimators turned out to be much more robust to outliers than the classical sample covariance which is the ML estimator in the Gaussian setup. The ideas of Maronna were further developed by D. E. Tyler who derived a distribution-free robust covariance matrix estimator \cite{tyler1987distribution}. This estimator fits any population from the Generalized Elliptical (GE) family \cite{frahm2004generalized} and is also a member of the $M$-estimators family. Tyler's estimator has become very widely used by engineers \cite{abramovich2007time, pascal2008covariance, bandiera2010knowledge, wiesel2015structured} since its discovery. Although, generally $M$-estimators are given as solutions to optimization programs, Tyler showed that his estimator can be obtained as a solution to a simple fixed point equation
\begin{equation}
\label{eq:tyler_def}
\T = \frac{p}{n}\sum_{i=1}^n \frac{\x_i\x_i^\top}{\x_i^\top\T^{-1}\x_i},
\end{equation}
where $\x_1,\dots,\x_n \in \mathbb{R}^{p}$ are the collected sample vectors. To avoid the obvious scaling ambiguity (for a solution $\T$ to (\ref{eq:tyler_def}), $c\cdot\T$ is also a solution whenever $c>0$), it is common to fix the scaling, e.g. by setting $\Tr{\T} = p$. When $\{\x_i\}_{i=1}^n$ are i.i.d. (independent and identically distributed) elliptical \cite{frahm2004generalized}, their true scatter matrix $\bm\Omega$ is positive definite and $n>p$, Tyler's estimator exists with probability one and is a consistent estimator of $\bm\Omega$. In \cite{tyler1987statistical} Tyler also demonstrated that his estimator can be viewed as an ML estimator of a certain distribution over a unit sphere. In elliptical populations the scatter matrix is equal to a positive multiple of the covariance matrix when the latter exists.

The elliptical and generalized elliptical classes of distributions are quite large to incorporate many known populations and they model the real non-Gaussian world behavior much better  \cite{pascal2008covariance, abramovich2007time, wiesel2012geodesic, zhang2013multivariate} than the Gaussian distributions. In particular, the GE family includes generalized Gaussian, compound Gaussian and many other widely used distributions \cite{frahm2004generalized}. Elliptical populations are commonly used to model radar clutter \cite{conte1991modelling}, noise and interference in indoor and outdoor mobile communication channels \cite{middleton1973man} and numerous other applications.

Other robust covariance estimation approaches were also proposed, however, they have not become so much popular as $M$-estimators. Among them is the Stahel-Donoho estimate \cite{stahel1981breakdown, donoho1981breakdown}, whose main idea is to detect and down-weight outliers based on their one dimensional  ``\textit{outlyingness measure}''. Another method proposed by P. J. Rousseeuw \cite{rousseeuw1985multivariate} is the Minimum Volume Ellipsoid estimate, whose name stems from the fact that among all ellipsoids containing at least half of the data points, the one defined by the Minimum Volume Ellipsoid estimate has the minimal volume. A more efficient approach, the so-called $S$-estimator, was later proposed by P. L. Davies \cite{davies1987asymptotic} and deeply investigated by H. P. Lopuhaa \cite{lopuhaa1989relation} and O. H{\"o}ssjer \cite{hossjer1992optimality}. The Minimum Covariance Determinant estimate \cite{rousseeuw1999fast} is another possibility of robust covariance estimation. When prior knowledge on the estimator is available to the research, a Bayesian covariance estimator \cite{leonard1992bayesian, alvarez2014bayesian, yang1994estimation} would become natural. Shrinkage estimators in the paradigm of James-Stein estimator are a particular case of the Bayesian methodology. Shrinkage estimators of covariance matrices are computed as a conical (often convex) combination of a certain data statistic (e.g. sample covariance) and a constant matrix (e.g. the identity matrix) representing the prior \cite{ledoit2004honey, schafer2005shrinkage}. Robust analogs of numerous shrinkage estimators were also recently developed and thoroughly studied \cite{chen2011robust, couillet2014large}.

The behavior of Tyler's estimator had been methodically investigated in various asymptotic regimes and multiple high-probability performance bounds have been developed for its analysis \cite{soloveychik2014non, besson2013fisher, greco2013cramer, duembgen1997asymptotic, frahm2012semicircle, zhang2014marchenko, couillet2012robust, couillet2014robust, couillet2014large, pascal2007covariance, bandiera2010knowledge}. However, all of these results only hold if the samples are elliptically distributed, which is easily achievable in simulation studies but can hardly be guaranteed in real applications. Therefore a much more practical question can be formulated as follows: Given the data, verify that the ellipticity assumptions can be applied to it and therefore the tools of robust statistics will yield meaningful and reliable results when applied to it. This is the question we address in our work focusing on Tyler's estimator.

\subsection{Our Approach and Contribution}
Many of the GoF tests method mentioned in Section \ref{sec:other_tests_intro} use plug-in estimates of the covariance matrix to whiten the samples before applying GoF test of uniformity over the unit sphere. Usually such plug-in estimates are based on the sample covariance matrix (e.g. \cite{manzotti2002statistic, koltchinskii2000testing, schott2002testing, huffer2007test} and numerous others) which creates an unbalanced situation. Indeed, such tests assume ellipticity of the population, thus allowing it to have heavy-tails, but the tools used to estimate the covariance are not robust and therefore no performance guarantees in finite samples can be offered by these tests. In other cases, robust estimates are used, however, without rigorous studies and claims because of the significant level of complexity of the required analysis. When testing the ellipticity hypothesis, it is common to separate the question of elliptical symmetry from the radial density \cite{tyler1987statistical, frahm2004generalized}. Such a standard approach enables one to project the samples on the unit sphere as described in Sections \ref{sec:setup} and \ref{sec:prob_state} in detail and study the distribution of their normalized values. Remarkably, modulo positive scaling, such a transformation does not affect the scatter matrix of an elliptical (or GE) population and it is the use of this very technique that led to the discovery of Tyler's estimator as an ML estimate of the scatter matrix \cite{tyler1987statistical}. Following these ideas - for the scenario of unknown scatter matrix - we exploit Tyler's estimator to develop an asymptotically consistent GoF test against all alternatives to ellipticity on the unit $p$-dimensional sphere.\footnote{The unknown distribution is assumed to be a Lebesgue measurable probability measure over the $p$-dimensional Euclidean sphere as elaborated further. By \emph{all alternatives}, it is common to understand the set of all probability measures except for the class under consideration, which in our case is the class of elliptical (or generalized elliptical) populations \cite{huffer2007test, sakhanenko2008testing, koltchinskii2000testing, gine1975invariant}.} To enable analytical treatment of such a hypothesis test, we reformulate it as an asymptotic uniformity test for a certain stochastically dependent sequence of unit random vectors. The main tool used in the construction and analysis of the uniformity tests for i.i.d. scenario is the Central Limit Theorem (CLT) \cite{gine1975invariant, prentice1978invariant, garcia2018overview} which is clearly not applicable when the measurements are not independent. For our setup, we develop a novel toolbox that allows verification of the null hypothesis by resorting to the concept of \textit{exchangeability}. 

A sequence of variables is called \textit{exchangeable} if the joint distribution of any finite subset of these variables is invariant under arbitrary permutations of their indices. Exchangeable random variables were first introduced by de Finetti \cite{de1929funzione, de1937prevision} as a direct and natural generalization of i.i.d. sequences. Interestingly, exchangeable random variables serve as one of the fundamental building blocks of the Bayesian statistics \cite{cifarelli1996finetti}. Unlike the i.i.d. case, the behavior of exchangeable sequences is much harder to analyze. We exploit certain versions of CLT and the Strong Law of Large Numbers (SLLN) for exchangeable variables to demonstrate asymptotic consistency of our test statistics built analogously to the generalized Ajne and Gin{\'e} statistics \cite{ajne1968simple, gine1975invariant, prentice1978invariant} developed for the i.i.d. case.\footnote{The Ajne statistic was originally introduced for distributions on a circle \cite{ajne1968simple}, the idea was extended by \cite{beran1968testing} to the $2$-dimensional unit sphere and later generalized by \cite{prentice1978invariant} for the $p-1$-dimensional spheres. Similarly, Gin{\'e}'s statistic was originally defined for $1$- and $2$-dimensional spheres and later extended by \cite{prentice1978invariant} to the general dimension.}

Following Tyler, our approach becomes essentially \emph{distribution-free} within the elliptical family since we do not focus on estimating the radial density function \cite{babic2019optimal}. We offer a test which is consistent against \emph{all} alternatives to elliptical symmetry and not only certain classes of densities \cite{babic2019optimal, manzotti2002statistic, cassart2008optimal}. We do not use the sample covariance matrix as a plug-in estimator in particular because its convergence to the true covariance in elliptical populations maybe very slow due to heavy tails \cite{manzotti2002statistic}, however, we emphasize that our technique \textit{allows} the use of the sample covariance instead of Tyler's estimator. Our test is not limited to certain moments which makes it more natural and less computationally demanding \cite{schott2002testing, cassart2008optimal}. Unlike Monte Carlo simulations-based techniques \cite{diks1999test, zhu2000nonparametric}, our methodology is rigorous and offers a deterministic and computationally cheap algorithm. Finally, unlike previously mentioned works (and articles referenced in \cite{babic2019optimal}) we believe that the methodology based on the \textit{exchangeability} framework is the most suitable for the analysis of populations transformed by plug-in estimators. This type of analysis is usually technically more complex but in our eyes it represents the natural approach to the problem. Our theoretical studies are supported by extensive numerical simulations featuring the properties of the tests and comparing them to other available tests mentioned in earlier in Section \ref{sec:other_tests_intro}.

The rest of the article is organized as follows. In Section \ref{sec:setup}, we introduce the setup and notation. The problem is formulated in Section \ref{sec:prob_state} where we also present some of the existing tests for the known scatter case. In Section \ref{sec:reform_aux}, we reformulate the problem and introduce necessary background on exchangeable random variables; Section \ref{sec:distr_null} provides some additional notation and auxiliary results. In Section \ref{sec:asymp_unif_exch}, we formulate the main results and discuss them. Section \ref{sec:num_sims} provides numerical studies of the proposed tests while Section \ref{sec:comparison} demonstrates their power in comparison to other commonly used ellipticity tests. The conclusion is provided in Section \ref{sec:concl}. Some of the proofs are postponed to the Appendix.

\section{Notation and Setup}
\label{sec:setup}
\begin{definition}[\cite{frahm2007tyler}]
\label{def:ellipt_def}
A vector $\y \in \mathbb{R}^p$ is elliptically distributed with the scatter matrix $\bm\Omega \succ 0$ and mean $\bm\mu$ if there exists a random vector $\w \in \mathcal{S}^{p-1}$ uniformly distributed over the unit $p-1$-dimensional sphere and an independent random variable $r\geqslant 0$, such that
\begin{equation}
\label{eq:def_cent_ellip}
\y = \bm\mu + r\cdot\bm\Omega^{1/2}\w.
\end{equation}
\end{definition}

For example, if $r \sim \sqrt{\chi_p^2}$, then $\y \sim \mathcal{N}\(\bm\mu, \bm\Omega\)$. In what follows we always assume that the data is centered, $\bm\mu=0$. Let us consider the normalized vector,
\begin{equation}
\label{eq:x_norm_def}
\x = \frac{\y}{\norm{\y}} = \frac{\bm\Omega^{1/2}\w}{\norm{\bm\Omega^{1/2}\w}},
\end{equation}
which can be equivalently viewed as disregarding the information stored in the scalar variable $r$ but keeping the information provided by the scatter matrix. As we see below, the distribution of $\x$ contains all the information about the scatter matrix $\bm\Omega$. We are going to recover the scatter matrix by sampling from the distribution of $\x$. Denote by $\I=\I_p$ the $p$-dimensional identity matrix.

\begin{definition}[\cite{tyler1987statistical}]
\label{def:ellip_angular}
The family of real Angular Central Gaussian (ACG) distributions on $\mathcal{S}^{p-1}$ is defined by the densities of the form
\begin{equation}
p(\x;\bm\Omega) = \frac{\Gamma(p/2)}{2\pi^{p/2}\det{\bm\Omega}^{1/2}}\frac{1}{(\x^\top\bm\Omega^{-1}\x)^{p/2}},\quad \x \in \mathcal{S}^{p-1},
\end{equation}
for $\bm\Omega \succ 0$ which is called the scatter matrix.
\end{definition}
When $\x$ is ACG distributed with the scatter matrix $\bm\Omega$, we write
\begin{equation}
\x \sim \mathcal{U}\(\bm\Omega\),
\end{equation}
in particular when $\bm\Omega=\I$ we get the uniform distribution over the unit sphere $\mathcal{U}\(\I\)$. Note that ACG is not a member of the elliptical family but actually belongs to a wider class of \textit{generalized} elliptical populations whose definition is identical to Definition \ref{def:ellipt_def} except for weakened assumptions on $r$ \cite{frahm2007tyler}. In generalized elliptical population, $r$ does not have to be stochastically independent of $\w$ and does not have to be non-negative. The following result allows us to reduce estimation of the scatter matrices of elliptical populations to the estimation of the scatter matrices of ACG vectors.
\begin{lemma}[\cite{frahm2007tyler}]
For a random vector $\y$ sampled from a centered elliptical population with the scatter matrix $\bm\Omega$, $\x$ defined in (\ref{eq:x_norm_def}) is ACG distributed with the same scatter matrix.
\end{lemma}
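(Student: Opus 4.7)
The plan is to reduce the statement to the Gaussian case and then compute the angular marginal by passing to polar coordinates. My first step would be to observe that the normalization in (\ref{eq:x_norm_def}) annihilates all dependence on the radial variable $r$ in Definition \ref{def:ellipt_def}: because $r\geq 0$ (and one may take $r>0$ almost surely so that $\y\neq 0$ and $\x$ is well-defined), we have
\[
\x \;=\; \frac{r\,\bm\Omega^{1/2}\w}{\norm{r\,\bm\Omega^{1/2}\w}} \;=\; \frac{\bm\Omega^{1/2}\w}{\norm{\bm\Omega^{1/2}\w}},
\]
which is a deterministic function of $\w$ alone. Hence the distribution of $\x$ does not depend on the law of $r$, and I may specialize to the convenient case $r\sim\sqrt{\chi_p^2}$, for which $\y\sim\mathcal{N}(\0,\bm\Omega)$.

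In the second step I would compute the push-forward under $\y\mapsto \y/\norm{\y}$ of the Gaussian density $(2\pi)^{-p/2}\det{\bm\Omega}^{-1/2}\exp\!\left(-\tfrac12\y^\top\bm\Omega^{-1}\y\right)$. Using the polar factorization $\y=\rho\x$ with $\rho=\norm{\y}>0$ and $\x\in\mathcal{S}^{p-1}$, Lebesgue measure decomposes as $d\y=\rho^{p-1}\,d\rho\,d\sigma(\x)$, where $d\sigma$ denotes the surface measure on the unit sphere. Substituting $\y^\top\bm\Omega^{-1}\y=\rho^2\,\x^\top\bm\Omega^{-1}\x$ yields a joint density of $(\rho,\x)$ proportional to $\rho^{p-1}\exp\!\left(-\tfrac12\rho^2\,\x^\top\bm\Omega^{-1}\x\right)$.

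The third step is to marginalize out $\rho$. With the substitution $u=\tfrac12\rho^2\,\x^\top\bm\Omega^{-1}\x$, the radial integral becomes a Gamma integral and evaluates to $2^{(p-2)/2}\Gamma(p/2)\,(\x^\top\bm\Omega^{-1}\x)^{-p/2}$. Collecting constants via $(2\pi)^{p/2}/2^{(p-2)/2}=2\pi^{p/2}$ recovers exactly the density in Definition \ref{def:ellip_angular}, establishing $\x\sim\mathcal{U}(\bm\Omega)$.

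There is no real obstacle: the only conceptual subtlety is the first step, where one must notice that the lemma is really a statement about the law of $\w$ alone, so that the unknown distribution of $r$ simply drops out. After that reduction, everything boils down to a textbook polar-coordinate computation of a Gaussian radial marginal.
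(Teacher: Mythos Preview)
Your proof is correct. The paper itself does not supply a proof of this lemma; it is quoted as a known result from \cite{frahm2007tyler}, so there is nothing to compare against line by line. Your argument is the standard one: the key observation is that the scale $r$ cancels in the normalization, so $\x$ depends only on $\w$ and one may without loss of generality take $\y$ Gaussian; the polar-coordinate computation then yields the ACG density exactly. The constants are handled correctly, and the only mild caveat is that you should assume $r>0$ almost surely (equivalently, that the elliptical law has no atom at the origin) so that $\x$ is well defined, which you already note.
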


Now assume $n > p$ i.i.d. random vectors $\x_1,\dots,\x_n \in \mathbb{S}^{p-1}$ are sampled from $\mathcal{U}\(\bm\Omega\)$, then as shown in \cite{tyler1987statistical} the ML estimator of the scatter matrix exists almost surely and is given by the fixed point equation (\ref{eq:tyler_def}). The solutions to this equation form a ray since the latter is invariant under multiplication of the matrix $\T$ by a positive constant. To resolve the ambiguity we choose $\T$ to satisfy $\Tr{\T} = p$, however, we note that the specific choice of the scaling does not affect any of the results presented below.

\section{Problem Formulation and State of the Art}
\label{sec:prob_state}
\subsection{Main Goal}
The problem considered in this article can be formulated as follows. Given a sequence of vectors $\{\x_i\}_{i=1}^n \subset \mathbb{S}^{p-1}$ sampled independently, we want to test two alternative hypotheses,
\begin{align}
\mathcal{H}_0 & : \x_1,\dots,\x_n \;\overset{\text{i.i.d.}}{\scalebox{1.5}[1]{$\sim$}}\;\; \mathcal{U}(\bm\Omega), \text{ for some } \bm\Omega, \label{eq:main_hyp_test_1}\\
\mathcal{H}_1 & : \x_1,\dots,\x_n \;\overset{\text{i.i.d.}}{\scalebox{1.5}[1]{$\nsim$}}\;\; \mathcal{U}(\bm\Omega), \text{ for any } \bm\Omega, \label{eq:main_hyp_test_2}
\end{align}
and in the case of $\mathcal{H}_0$ we want to estimate the scatter matrix $\bm\Omega$, as well.

The test (\ref{eq:main_hyp_test_1})-(\ref{eq:main_hyp_test_2}) is a composite hypothesis since the scatter matrix is unknown. When the scatter matrix is known, the problem can be equivalently reformulated as a uniformity test on the sphere as shown below.

\subsection{Uniformity Tests on $\mathbb{S}^{p-1}$}
\label{sec:unif_test}
Assume the scatter matrix $\bm\Omega$ in the hypothesis test (\ref{eq:main_hyp_test_1})-(\ref{eq:main_hyp_test_2}) is known and introduce a derived i.i.d. sequence,
\begin{equation}
\label{eq:w_i_def}
\w_i = \frac{\bm\Omega^{-1/2}\x_i}{\norm{\bm\Omega^{-1/2}\x_i}},\quad i=1,\dots,n.
\end{equation}
Under $\mathcal{H}_0$, $\w_1,\dots,\w_n \;\scalebox{1.5}[1]{$\sim$}\;\; \mathcal{U}(\I)$ and therefore the test (\ref{eq:main_hyp_test_1})-(\ref{eq:main_hyp_test_2}) becomes actually a uniformity test on the unit sphere,
\begin{align}
\mathcal{G}_0 & : \w_1,\dots,\w_n \;\overset{\text{i.i.d.}}{\scalebox{1.5}[1]{$\sim$}}\;\; \mathcal{U}(\I), \label{eq:u_0_hyp} \\
\mathcal{G}_1 & : \w_1,\dots,\w_n \;\overset{\text{i.i.d.}}{\scalebox{1.5}[1]{$\nsim$}}\;\; \mathcal{U}(\I).
\end{align}
Next, we summarize two uniformity tests on $\mathbb{S}^{p-1}$ concluding this section with Proposition \ref{prop:main_unif_test_iid} providing a uniformity test consistent against all alternatives on the unit sphere. Based on it, we will develop an analogous test for (\ref{eq:main_hyp_test_1})-(\ref{eq:main_hyp_test_2}) with unknown scatter matrix in the subsequent sections. Denote by
\begin{equation}
V_{p-1} = \int_{\x \in \mathbb{S}^{p-1}} d\x = \frac{2\pi}{\Gamma\(\frac{p}{2}\)}
\end{equation}
the area of the unit sphere. In addition, by 
\begin{equation}
\label{eq:psi_not}
\psi_{ij} = \arccos(\x_i^\top\x_j)
\end{equation}
we denote the angular separation (the shortest great circle distance) between $\x_i$ and $\x_j$ and by
\begin{equation}
N(\y) = |\{\x_i \mid \y^\top\x_i \geqslant 0\}|,\quad \y \in \mathbb{S}^{p-1},
\end{equation}
the number of points falling into the hemisphere with the pole at $\y$. Denote also
\begin{equation}
\alpha = \frac{p}{2}-1,
\end{equation}
\begin{equation}
\nu(a,b) = {a+b-2 \choose a-1} + {a+b-1 \choose a-1}.
\end{equation}

The following two popular statistics and detailed investigation of their behavior can be found in \cite{ajne1968simple, gine1975invariant}. These results were later generalized in \cite{prentice1978invariant} and summarized in \cite{garcia2018overview}. 

\begin{prop}[Generalized Ajne Test, \cite{ajne1968simple, prentice1978invariant}]
Under the uniformity hypothesis, the Ajne statistic
\begin{equation*}
t_{A} = \frac{1}{nV_{p-1}} \int_{\y \in \mathbb{S}^{p-1}}\(N(\y)-\frac{n}{2}\)^2 d\y = \frac{n}{4}-\frac{1}{\pi n}\sum_{i<j}\psi_{ij}
\end{equation*}
is asymptotically distributed as $\mathcal{L}\(\sum_{q=1}^\infty a_{2q-1}^2K_{\nu(p-1,2q-1)}\)$, where $K_{\xi}$ are independent random variables distributed as $\chi_{\xi}^2$ and
\begin{equation}
a_{2q-1} = \frac{(-1)^{q-1}2^{p-2}\Gamma(\alpha+1)\Gamma(q+\alpha)(2q-2)}{\pi(q-1)!(2q+p-3)!}.
\end{equation}
\end{prop}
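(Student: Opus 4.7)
The proposition contains two distinct claims which I would establish separately: the exact algebraic identity rewriting the integral as a sum of pairwise angles, and the asymptotic distributional statement under $\mathcal{G}_0$. For the identity, set $h_i(\y) = \mathbf{1}\{\y^\top\x_i \geq 0\} - 1/2$ so that $N(\y) - n/2 = \sum_{i=1}^n h_i(\y)$, expand the square, and interchange sum with integral. Each diagonal term $\int h_i^2\, d\y$ equals $V_{p-1}/4$, while each off-diagonal term reduces to $|A_i \cap A_j| - V_{p-1}/4$ with $A_k = \{\y : \y^\top\x_k \geq 0\}$. A standard rotational-invariance reduction to the two-dimensional plane spanned by $\x_i$ and $\x_j$, followed by integration over the orthogonal complement, gives the classical identity $|A_i \cap A_j| = V_{p-1}(\pi - \psi_{ij})/(2\pi)$. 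Substituting back and dividing by $nV_{p-1}$ produces the stated $n/4 - (\pi n)^{-1}\sum_{i<j}\psi_{ij}$.

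For the asymptotic law, each $h_i$ is a zonal function with axis $\x_i$: $h_i(\y) = g(\x_i^\top\y)$ with $g(t) = \mathbf{1}\{t \geq 0\} - 1/2$. Expand $g$ in the Gegenbauer basis $\{C_k^\alpha\}_{k \geq 0}$ with $\alpha = p/2 - 1$; since $g$ is odd in $t$, only odd-degree terms survive, giving $g(t) = \sum_{q \geq 1} a_{2q-1}C_{2q-1}^\alpha(t)$. The coefficient $a_{2q-1}$ is then obtained from the Funk-Hecke orthogonality relation as a weighted integral of $C_{2q-1}^\alpha(t)$ against the indicator of $[0,1]$ under the weight $(1-t^2)^{\alpha-1/2}$. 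Reducing this integral via the Rodrigues representation of the Gegenbauer polynomials together with the Legendre duplication identity for the Gamma function yields the explicit closed form quoted in the proposition.

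By the addition theorem, $C_k^\alpha(\x^\top\y)$ is a fixed positive multiple of $\sum_{\ell=1}^{\nu(p-1,k)} Y_k^\ell(\x) Y_k^\ell(\y)$ for any real $L^2$-orthonormal basis $\{Y_k^\ell\}$ of the degree-$k$ spherical harmonics on $\mathbb{S}^{p-1}$, a space of dimension exactly $\nu(p-1,k)$. Plugging the Gegenbauer expansion of $h_i$ back into the integral form of $t_A$ and using orthogonality across degrees and harmonic indices collapses the double sum $\sum_{i,j}\int h_i h_j\, d\y$ into block terms proportional to $a_{2q-1}^2\sum_{\ell}\bigl(n^{-1/2}\sum_i Y_{2q-1}^\ell(\x_i)\bigr)^2$, with all constants from the addition theorem and $V_{p-1}$ absorbed into the normalization. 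Under $\mathcal{G}_0$, every harmonic of positive degree has zero mean and the basis is orthonormal with respect to the uniform measure on the sphere, so the multivariate i.i.d.\ CLT yields joint convergence of each block vector to a standard Gaussian with independence across distinct degrees; the squared norm of each block is therefore asymptotically $\chi^2_{\nu(p-1,2q-1)}$, giving the claimed weighted mixture. The principal technical obstacle is reducing the Funk-Hecke integral to the specific closed form for $a_{2q-1}$: writing the integral down is elementary, but simplifying it to the stated product of Gamma values and factorials is a tedious exercise combining the Gamma duplication identity with repeated integration by parts through the Rodrigues formula.
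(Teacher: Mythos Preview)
The paper does not supply its own proof of this proposition; it is quoted verbatim as a known result from the cited references of Ajne and Prentice, so there is nothing in the paper to compare against directly. Your outline is essentially a reconstruction of the classical Gin\'e--Prentice argument underlying those references: the hemisphere-intersection computation for the closed form, the odd-Gegenbauer expansion of the indicator kernel, the addition theorem to pass to spherical harmonics, and the multivariate CLT on each harmonic block. That is the standard route and it is correct in spirit.

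There is, however, one genuine gap. You establish that for each fixed degree $2q-1$ the normalized block $n^{-1/2}\sum_i\bigl(Y_{2q-1}^\ell(\x_i)\bigr)_\ell$ converges in law to a standard Gaussian, with independence across degrees in the limit. But $t_A$ is an \emph{infinite} sum of such blocks, and joint convergence of all finite subfamilies does not by itself give convergence in law of the full statistic to $\sum_{q\geqslant 1} a_{2q-1}^2 K_{\nu(p-1,2q-1)}$. You need a tightness (uniform tail) step: show that $\sum_{q\geqslant 1} a_{2q-1}^2\,\nu(p-1,2q-1)<\infty$, which is exactly Parseval for $g\in L^2\bigl((1-t^2)^{\alpha-1/2}\,dt\bigr)$ and simultaneously equals $\mathbb{E}[t_A]$ under $\mathcal{G}_0$; this bounds the expected remainder $\sum_{q>Q}$ uniformly in $n$ and lets you pass to the limit term by term. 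Gin\'e's original argument makes this step explicit, and you should too.

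A minor observation unrelated to your method: the displayed formula for $a_{2q-1}$ in the proposition has the factor $(2q-2)$, which vanishes at $q=1$ and would annihilate the first-degree harmonic contribution. Your Funk--Hecke computation for $C_1^\alpha$ would produce a nonzero value, so this appears to be a misprint in the statement rather than an error in your plan.
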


\begin{prop}[Generalized Gin{\'e} Test, \cite{gine1975invariant, prentice1978invariant}]
\label{prop:gine_origin}
Under the uniformity hypothesis, the Gin{\'e} statistic
\begin{equation}
t_{G} = \frac{n}{2}-\frac{p-1}{2n}\(\frac{\Gamma\(\alpha+\frac{1}{2}\)}{\Gamma(\alpha+1)}\)^2\sum_{i<j}\sin(\psi_{ij})
\end{equation}
is asymptotically distributed as $\mathcal{L}\(\sum_{q=1}^\infty a_{2q}^2K_{\nu(p-1,2q)}\)$, where $K_{\xi}$ are independent random variables distributed as $\chi_{\xi}^2$ and
\begin{equation}
a_{2q}^2 = \frac{(p-1)(2q-1)}{8\pi(2q+p-1)}\(\frac{\Gamma\(\alpha+\frac{1}{2}\)\Gamma\(q-\frac{1}{2}\)}{\Gamma\(q+\alpha+\frac{1}{2}\)}\)^2.
\end{equation}
\end{prop}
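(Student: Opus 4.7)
The plan is to recognize
\begin{equation*}
t_G-\tfrac{n}{2}\;=\;-\tfrac{p-1}{2n}\Bigl(\tfrac{\Gamma(\alpha+1/2)}{\Gamma(\alpha+1)}\Bigr)^2\sum_{i<j}\sin(\psi_{ij})
\end{equation*}
as a normalized V-statistic of order two on the i.i.d.\ uniform sample on $\mathbb{S}^{p-1}$ with rotationally invariant kernel $h(\x,\y)=\sin(\arccos(\x^\top\y))=\sqrt{1-(\x^\top\y)^2}$. A short Beta-integral calculation yields $\EE{h(\x_1,\x_2)}=\Gamma(\alpha+1)^2/[\Gamma(\alpha+1/2)\Gamma(\alpha+3/2)]$, and one checks that the multiplicative constant and the additive shift $n/2$ are engineered precisely so that $t_G=\tfrac{1}{2}-\tfrac{C(n-1)}{2}U_n$ with $C:=\tfrac{p-1}{2}(\Gamma(\alpha+1/2)/\Gamma(\alpha+1))^2$, $U_n$ the centered $U$-statistic of the kernel $\tilde h:=h-\EE{h(\x_1,\x_2)}$, and $C\,\EE{h(\x_1,\x_2)}=1$ canceling the would-be $O(n)$ contribution. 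The asymptotics of $t_G$ therefore reduce entirely to those of $U_n$.

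First I would verify \emph{degeneracy} $\EE{\tilde h(\x,\y)\mid\x}=0$: since $h$ depends on $(\x,\y)$ only through $\x^\top\y$, rotational invariance of $\mathcal{U}(\I)$ forces $\EE{h(\x,\y)\mid\x}$ to be a constant function of $\x$, and subtracting $\EE{h}$ kills that constant. Degeneracy rules out a Gaussian limit and forces the $1/n$ scaling already appearing in $t_G$. The limit is then read off from the spectral decomposition of the compact self-adjoint integral operator $\mathcal{K}\phi(\x)=\int_{\mathbb{S}^{p-1}}\tilde h(\x,\y)\phi(\y)\,d\y$ on $L^2(\mathbb{S}^{p-1})$: by the standard Karhunen--Lo\`{e}ve decomposition of the empirical process,
\begin{equation*}
\tfrac{C(n-1)}{2}U_n\;\xrightarrow{\,d\,}\;\tfrac{C}{2}\sum_{q\geqslant 1}\lambda_{q}\bigl(K_{\nu(p-1,q)}-\nu(p-1,q)\bigr),
\end{equation*}
where $\lambda_q$ is the eigenvalue of $\mathcal{K}$ on the degree-$q$ spherical harmonic subspace of dimension $\nu(p-1,q)$ and the $K_{\nu(p-1,q)}\sim\chi^2_{\nu(p-1,q)}$ are independent.

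Next I would diagonalize $\mathcal{K}$ explicitly via the Funk--Hecke theorem, which asserts that $\lambda_q$ is proportional to $\int_{-1}^{1}\tilde h(t)\,C_q^{\alpha}(t)(1-t^2)^{\alpha-1/2}\,dt$ against the Gegenbauer polynomial $C_q^\alpha$. Since $\tilde h$ viewed as a function of $t=\x^\top\y$ is even while $C_q^\alpha(-t)=(-1)^qC_q^\alpha(t)$, every odd-degree eigenvalue vanishes, leaving only the indices $q=2,4,6,\dots$ which, after reindexing as $2q$, match exactly what appears in the proposition. The remaining deterministic constant $\tfrac{1}{2}+\tfrac{C}{2}\sum_{q\geqslant 1}\lambda_{2q}\nu(p-1,2q)$ vanishes by the trace identity $\sum_{q\geqslant 1}\lambda_{2q}\nu(p-1,2q)=\tilde h(\x,\x)=-\EE{h(\x_1,\x_2)}$ combined with $C\,\EE{h(\x_1,\x_2)}=1$, producing the chi-square-only limit law claimed.

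The main obstacle is the explicit evaluation of the Funk--Hecke integral at even degree, which must yield $a_{2q}^2=-\tfrac{C}{2}\lambda_{2q}$ in the specific closed form
\begin{equation*}
a_{2q}^2=\frac{(p-1)(2q-1)}{8\pi(2q+p-1)}\Bigl(\tfrac{\Gamma(\alpha+1/2)\Gamma(q-1/2)}{\Gamma(q+\alpha+1/2)}\Bigr)^2.
\end{equation*}
To handle it I would exploit the fortunate cancellation $\sqrt{1-t^2}\,(1-t^2)^{\alpha-1/2}=(1-t^2)^{\alpha}$ to rewrite the Funk--Hecke integral as $\int_{-1}^{1}(1-t^2)^{\alpha}C_{2q}^\alpha(t)\,dt$, then apply the Rodrigues formula for $C_{2q}^\alpha$ and integrate by parts $2q$ times, reducing the task to a single Beta integral $B(1/2,\,\cdot\,)$ whose evaluation assembles into the stated product of gamma factors. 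Summability of $a_{2q}^2\,\nu(p-1,2q)$ and tightness of the partial sums, needed to rigorously pass to the infinite-dimensional weak limit, both follow from the Hilbert--Schmidt property of $\mathcal{K}$ on $L^2(\mathbb{S}^{p-1})$.
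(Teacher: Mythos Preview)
The paper does not supply its own proof of this proposition; it is quoted as a known result from Gin\'e and Prentice. Your sketch is correct and is precisely Gin\'e's original argument: recognize $t_G$ as a degenerate V-statistic with rotationally invariant kernel, invoke the spectral limit theorem for degenerate $U$-statistics, diagonalize the kernel on $L^2(\mathbb{S}^{p-1})$ via Funk--Hecke, and use parity of $\sqrt{1-t^2}$ to annihilate the odd harmonics. The centering identity $C\,\EE{h(\x_1,\x_2)}=1$, the Mercer trace identity that removes the deterministic shift, and the Rodrigues/Beta reduction of the even-degree Funk--Hecke integrals are all as you describe. For context, the paper's proof of the closely related Proposition~\ref{prop:gine_exch} takes the Gegenbauer expansion of $\sin\theta$ in $\cos\theta$ as a quoted formula from Prentice rather than deriving it through Funk--Hecke, and then manipulates the resulting series term by term; this is the same underlying computation, organized in the reverse order.
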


The following statement provides a concise and directly applicable test for uniformity under the assumption that the random vectors are sampled i.i.d. from $\mathcal{U}(\I)$.

\begin{prop}[Uniformity test, \cite{gine1975invariant, prentice1978invariant}]
\label{prop:main_unif_test_iid}
Any weighted sum of $t_A$ and $t_G$ is consistent against all alternatives to uniformity on $\mathbb{S}^{p-1}$.
\end{prop}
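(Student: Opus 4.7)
The plan is to cast both $t_A$ and $t_G$ as U-statistics with inner-product kernels and to analyze their asymptotic means under an arbitrary alternative $\x_i\overset{\text{iid}}{\sim} h$ via the Gegenbauer / spherical-harmonic decomposition. Up to additive and multiplicative constants one has $t = T_0 + c_n \sum_{i<j} f(\x_i^\top\x_j)$ with $f_A(t)\propto-\arccos(t)$ for Ajne and $f_G(t)\propto-\sqrt{1-t^2}$ for Gin\'e; both kernels are bounded and depend only on $\x_i^\top\x_j$, so the classical U-statistic strong law gives $\frac{2}{n(n-1)}\sum_{i<j}f(\x_i^\top\x_j) \xrightarrow{\text{a.s.}} \mathbb{E}_h[f(\x^\top\y)]$, reducing consistency to comparing this alternative mean with its null counterpart.

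Next I would expand $f(t) = \sum_{k\ge 0}\mu_{f,k}\,C_k^{(\alpha)}(t)$ with $\alpha=(p-2)/2$ and apply the spherical-harmonic addition formula $C_k^{(\alpha)}(\x^\top\y)=\beta_k\sum_{\ell}Y_{k\ell}(\x)\overline{Y_{k\ell}(\y)}$ to obtain
\begin{equation*}
\mathbb{E}_h[f(\x^\top\y)] - \mathbb{E}_0[f(\x^\top\y)] \;=\; \sum_{k\ge 1}\mu_{f,k}\,\beta_k\,\norm{\Pi_k h}_{L^2}^2,
\end{equation*}
where $\Pi_k h$ is the projection of $h$ onto the $k$th spherical-harmonic subspace. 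The parity $C_k^{(\alpha)}(-t)=(-1)^k C_k^{(\alpha)}(t)$, combined with the facts that $\arccos(t)-\pi/2$ is odd and $\sqrt{1-t^2}$ is even in $t$, forces $\mu_{f_A,k}=0$ for even $k$ and $\mu_{f_G,k}=0$ for odd $k$. Strict positivity of $\mu_{f_A,k}\beta_k$ for odd $k$ and of $\mu_{f_G,k}\beta_k$ for even $k\ge 2$, which rules out accidental cancellation between modes, follows either from the explicit coefficients $a_{2q-1}^2,a_{2q}^2$ stated in the previous two propositions or, more transparently for Ajne, from the manifestly quadratic identity $t_A = \tfrac{1}{nV_{p-1}}\int_{\mathbb{S}^{p-1}}(N(\y)-n/2)^2\,d\y$, whose SLLN limit equals $\tfrac{1}{V_{p-1}}\norm{H_h-1/2}_{L^2}^2\ge 0$ with $H_h(\y)=\mathrm{Pr}_h(\y^\top\x\ge 0)$; this limit vanishes iff $h$ is centrally symmetric, i.e.\ has no odd-harmonic content. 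An analogous great-circle-moment representation for $t_G$ handles the even modes.

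Putting the pieces together, under any non-uniform $h$ at least one $\Pi_k h\neq 0$ with $k\ge 1$; if $k$ is odd then $t_A/n$ has a strictly positive a.s.\ limit, and if $k$ is even then so does $t_G/n$. Since both $t_A$ and $t_G$ remain $O_P(1)$ under the null by Propositions 2 and 3, any weighted sum $w_A t_A + w_G t_G$ with $w_A,w_G>0$ diverges almost surely under every alternative, yielding asymptotic power one. The main obstacle I foresee is exactly the sign-definiteness of the Gegenbauer coefficients at every admissible $k$, ensuring that no cancellation across harmonic orders can hide a non-uniform $h$ from both statistics simultaneously; the $L^2$-of-hemispherical-mass interpretation for $t_A$ (and its analogue for $t_G$) is what makes this transparent without a direct sign-by-sign coefficient computation.
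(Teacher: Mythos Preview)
The paper does not supply its own proof of this proposition; it is stated as a known result and attributed to Gin\'e and Prentice. Your reconstruction is faithful to the Gin\'e--Prentice Sobolev-test argument: expand the rotation-invariant kernel in Gegenbauer polynomials, use the Funk--Hecke/addition formula to turn the U-statistic limit into $\sum_{k\ge 1}a_k^2\norm{\Pi_k h}_{L^2}^2$, observe that the Ajne kernel carries only odd-$k$ coefficients and the Gin\'e kernel only even-$k$ coefficients, and conclude that a positive combination has all $a_k^2>0$ for $k\ge 1$, forcing divergence under every non-uniform alternative. The sign-definiteness worry you flag is exactly what the squared form of the $a_k^2$ in Propositions~2 and~3 (or, equivalently, the quadratic hemispherical-mass representation of $t_A$) resolves, and you identify this correctly.

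Two small remarks. First, ``any weighted sum'' in the statement should be read as ``any positive-weighted sum''; your restriction $w_A,w_G>0$ is the intended one, since with a zero or negative weight the combined statistic can fail to diverge (e.g.\ $t_A$ alone is blind to even alternatives). Second, your informal phrase ``analogous great-circle-moment representation for $t_G$'' is doing real work and deserves one more sentence: unlike $t_A$, the Gin\'e statistic does not have an obvious nonnegative integral representation, so the clean route is simply to read off $a_{2q}^2>0$ from Proposition~3 rather than to appeal to a geometric analogue. With that clarification your sketch is complete.
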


In practice, one way to make the decision about accepting or rejecting $\mathcal{H}_0$ or $\mathcal{G}_0$ is as follows. The statistician truncates the series mentioned in the last two propositions in a data-driven manner and compares the sample values of $t_A$ and $t_G$ with the tables (or explicit numerical approximations) of the corresponding distributions. Another more general approach consists in replacing $t_A$ and $t_G$ by statistics whose expansions only have finite number of non-zero coefficients $a_k$ (see \cite{gine1975invariant} for more details). An efficient data-driven approach to the design of the uniformity tests based on a modification of the Bayesian Information Criterion was developed by \cite{jupp2008data}. Often in practice, the distributions of the statistics at hand under the null hypothesis are estimated empirically, by generating samples from the latter. This is the approach adopted in Section \ref{sec:comparison} below.

In this paper we are interested in the case of unknown scatter matrix in (\ref{eq:main_hyp_test_1})-(\ref{eq:main_hyp_test_2}). As we see below this makes the hypothesis test much more involved. In the next sections we develop analogs of generalized Ajne and Gin{\'e} uniformity tests for this scenario.

\section{Problem Reformulation and Exchangeability}
\label{sec:reform_aux}
\subsection{Methodology}
From Theorem 3.1 from \cite{tyler1987distribution} we know that under $\mathcal{H}_0$ Tyler's estimator converges almost surely to the true scatter matrix when $n\to\infty$. This idea motivated our study of a new sequence of vectors, defined as follows. Under $\mathcal{H}_0$ introduced in (\ref{eq:main_hyp_test_1}), we now consider the sequence
\begin{equation}
\t_i = \frac{\T^{-1/2}\x_i}{\norm{\T^{-1/2}\x_i}} \in \mathbb{S}^{p-1},\quad i=1,\dots,n,
\end{equation}
where $\T$ is defined in (\ref{eq:tyler_def}). The main challenge we face in the study of $\{\t_i\}$ is the lack of stochastic independence unlike the case of $\{\w_i\}$ defined in (\ref{eq:w_i_def}). Indeed, most existing convergence results explicitly rely on independence in their derivations in such a way that any deviation from this assumption ruins the performance analysis. For example, all the results of Ajne, Gin{\'e}, and Prentice utilize the CLT and thus require independence as the most crucial assumption \cite{ajne1968simple, gine1975invariant, prentice1978invariant, garcia2018overview}.  

Next we include a brief summary of the exchangeability concept and the related toolbox. We then use it in Section \ref{sec:distr_null} to overcome the loss of independence in our analysis of the consistency of $\{\t_i\}$ and their statistics.

\subsection{Exchangeable Random Variables}
\begin{definition}
Given a sequence $\{X_i\}$ (finite or infinite) of random variables, we say that it is exchangeable if the joint distribution of any finite subset of variables is invariant under arbitrary permutations of their indices.
\end{definition}
In other words, exchangeability is our indifference to the order of the measurements. This is clearly a much weaker hypothesis than independence, as any i.i.d. sequence is obviously exchangeable. In his seminal works de Finetti \cite{de1929funzione, de1937prevision} demonstrated that in certain sense every (infinite) exchangeable sequence can be represented as a composition of sequences of i.i.d. variables. This result can be viewed as the analog of Fourier decomposition in analysis, as it allows one to represent a \textit{more complicated} exchangeable sequence as a superposition of basic building blocks - independent sequences - objects much easier accessible for analysis and reasoning.

De Finetti \cite{de1929funzione, de1937prevision} and some of his followers focused on infinite exchangeable sequences. There exist, however, finite sets of exchangeable random variables which cannot be embedded into infinite sequences, these are called \textit{finitely exchangeable} or \textit{non-extendable}. The analysis of extendable sequences can be reduced to the analysis of infinite sequences. On the other hand, the non-extendable sequences require quite different approaches \cite{konstantopoulos2019extendibility}. Our sequence of samples $\{\t_i\}_{i=1}^n$ is an example of a non-extendable exchangeable sequence of random vectors. Indeed, their order obviously does not matter since $\T$ is not affected by permutations of the measurements $\{\x_i\}_{i=1}^n$. We can also see that this sequence is non-extendable, since addition of new random vectors $\x_j$ without an amendment of $\T$ will turn the sequence into non-exchangeable. For a detailed study of non-extendability we refer the reader to \cite{konstantopoulos2019extendibility} and references therein.

The main result of our paper can be briefly summarized as follows. We demonstrate that the limiting behavior of the samples $\{\t_i\}_{i=1}^n$ is in certain sense analogous to the behavior of the vectors uniformly distributed over the unit sphere and therefore, we can apply similar tools for the hypothesis tests. Below we show how to overcome the technical challenges on this way.

\subsection{Limit Theorems for Exchangeable Variables}
To illustrate the previous section and better describe the nature of the exchangeability phenomenon and its relation to the stochastic independence, in this section we present analogs of the SLLN and CLT for triangular arrays of exchangeable variables.
\begin{lemma}[Strong Law of Large Numbers for Exchangeable Arrays]
\label{lem:SLLN}
Let $\{X_{ni}\}_{n,i=1}^{\infty,n}$ be a triangular array of row-wise exchangeable random variables and $\{X_{\infty i}\}_{i=1}^{\infty}$ be a sequence of exchangeable random variables of bounded second moment such that
\begin{enumerate}
\item $X_{n1} \xrightarrow{a.s.} X_{\infty 1},\;\; n\to\infty$,
\item $\var{X_{n1}-X_{\infty 1}} \to 0,\;\; n\to\infty$,
\item $\mathbb{E}\[X_{n1}X_{n2}\] \to 0,\;\; n\to\infty$.
\end{enumerate}
Then
\begin{equation}
\frac{1}{n}\sum_{i=1}^{n}X_{ni} \xrightarrow{a.s.} 0,\;\; n \to \infty.
\end{equation}
\end{lemma}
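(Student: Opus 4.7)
The plan is a two-phase argument: first establish $L^{2}$ convergence of the sample mean $\bar S_{n}=\frac{1}{n}\sum_{i=1}^{n}X_{ni}$ through a direct second-moment calculation that leverages row-wise exchangeability, and then upgrade this to the almost sure statement via a subsequence/coupling argument.

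\textbf{Phase 1 ($L^{2}$ convergence).} For each fixed $n$, row-wise exchangeability of $(X_{n1},\ldots,X_{nn})$ yields the identity
\[
\mathbb{E}[\bar S_{n}^{2}] \;=\; \frac{1}{n}\mathbb{E}[X_{n1}^{2}] \;+\; \frac{n-1}{n}\mathbb{E}[X_{n1}X_{n2}].
\]
First I would establish the uniform bound $\sup_{n}\mathbb{E}[X_{n1}^{2}]<\infty$. Writing $X_{n1}=X_{\infty 1}+(X_{n1}-X_{\infty 1})$ and applying the triangle inequality in $L^{2}$, the limit has finite second moment by hypothesis, while combining condition 2 (vanishing variance of the difference) with condition 1 (almost sure convergence of the difference to zero) through a Fatou/uniform-integrability argument shows that the mean of the difference also vanishes, and hence $\mathbb{E}[(X_{n1}-X_{\infty 1})^{2}]\to 0$. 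The diagonal term in the display therefore decays as $O(1/n)$ and condition 3 forces the off-diagonal term to zero, so $\bar S_{n}\to 0$ in $L^{2}$.

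\textbf{Phase 2 (a.s. upgrade).} The plan is the decomposition
\[
\bar S_{n} \;=\; \frac{1}{n}\sum_{i=1}^{n}X_{\infty i} \;+\; \frac{1}{n}\sum_{i=1}^{n}\bigl(X_{ni}-X_{\infty i}\bigr).
\]
For the first summand I would apply the reverse-martingale SLLN for infinite exchangeable sequences of bounded second moment, which gives almost sure convergence to $\mathbb{E}[X_{\infty 1}\mid\mathcal{E}]$ with $\mathcal{E}$ the exchangeable $\sigma$-algebra; this conditional expectation equals zero, since passing condition 3 to the limit (using the uniform integrability obtained in Phase 1) yields $\mathbb{E}[X_{\infty 1}X_{\infty 2}]=0$. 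For the residual I would extract a rapidly increasing subsequence $n_{k}$ along which the $L^{2}$ bound from Phase 1 is summable, apply Chebyshev plus Borel--Cantelli to get a.s.\ convergence along $\{n_{k}\}$, and then interpolate to intermediate $n$ via a second-moment estimate that exploits the joint exchangeability of the pairs $\bigl((X_{ni},X_{\infty i})\bigr)_{i=1}^{n}$ within each row.

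\textbf{Main obstacle.} The key difficulty is the interpolation step in Phase 2: because different rows of the triangular array share no summands, there is no natural filtration linking them and hence no direct maximal inequality. Pointwise a.s.\ convergence of each $X_{ni}-X_{\infty i}$ to zero does not by itself force almost sure convergence of the average, so condition 2 together with row exchangeability must be used in a quantitative way to control the oscillation of the residual between consecutive $n_{k}$. I expect this moment-based interpolation to be the technically heaviest part of the argument, while Phase 1 is expected to be essentially routine once the uniform second-moment bound is established.
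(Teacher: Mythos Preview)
The paper's own proof is not an argument at all: it simply invokes Theorem~1 of Taylor (1985) on SLLNs for row-wise exchangeable arrays in Banach spaces, observing that conditions 1--3 here specialize Taylor's hypotheses to $\mathbb{R}$. So there is no methodology in the paper to compare against; the work is outsourced to that reference.

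Your Phase~1 is correct. One minor comment: you do not need a Fatou or uniform-integrability step to conclude $\mathbb{E}[X_{n1}-X_{\infty 1}]\to 0$. Condition~2 forces the centred difference to $0$ in probability, condition~1 forces the difference itself to $0$ in probability, and subtracting shows the deterministic mean tends to $0$; hence $\mathbb{E}[(X_{n1}-X_{\infty 1})^2]\to 0$ and $\sup_n\mathbb{E}[X_{n1}^2]<\infty$ follow cleanly.

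The genuine gap is precisely where you locate it, and I do not believe it can be closed along the lines you sketch. Nothing in conditions 1--3 gives a \emph{rate} for $\mathbb{E}[(X_{n1}-X_{\infty 1})^2]$ or for $\mathbb{E}[X_{n1}X_{n2}]$, so $\mathbb{E}[\bar S_n^{2}]$ may decay arbitrarily slowly. You can always extract a subsequence $\{n_k\}$ along which Borel--Cantelli applies, but for $n_k<n<n_{k+1}$ the row $\{X_{ni}\}_{i\le n}$ shares no summands with the row $\{X_{n_k i}\}_{i\le n_k}$, and row-wise exchangeability provides no coupling \emph{across} rows. A ``moment-based interpolation'' would require controlling $\max_{n_k\le n<n_{k+1}}|\bar S_n|$, and there is simply no maximal inequality available from the hypotheses: this is exactly the well-known obstruction to upgrading $L^2$ convergence of row means to an a.s.\ SLLN for triangular arrays. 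Taylor's argument avoids this by combining truncation with a reverse-martingale structure tailored to arrays, rather than a subsequence-and-fill scheme; if you want a self-contained proof, that is the machinery you should import, not interpolation.
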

\begin{proof}
The proof can be found in the Appendix.
\end{proof}

Let $k_n < n$ be two sequences of natural numbers such that
\begin{equation}
\label{eq:m_n_cond}
\frac{k_n}{n} \to \gamma \in [0,1).
\end{equation}

\begin{lemma}[Central Limit Theorem for Exchangeable Arrays, Theorem 2 from \cite{weber1980martingale}\footnote{To simplify the notation we assume the number of the elements in the $n$-th row to be $n$ unlike the seemingly more general case of $m_n$ variables considered in \cite{weber1980martingale}.}]
\label{eq:martin_theorem}
Let $\{X_{ni}\}_{n,i=1}^{\infty,n}$ be a triangular array of row-wise exchangeable random variables such that
\begin{enumerate}
\item $\mathbb{E}\[X_{n1}X_{n2}\] \to 0,\;\; n \to \infty$,
\item $\max\limits_{1\leqslant i \leqslant n} \frac{|X_{ni}|}{\sqrt{n}} \xrightarrow{P} 0,\;\; \forall n$,
\item $\frac{1}{k_n}\sum_{i=1}^{k_n} X_{ni}^2 \xrightarrow{P} 1,\;\; n \to \infty$.
\end{enumerate}
Then
\begin{equation*}
\sqrt{k_n}\[\frac{1}{k_n}\sum_{i=1}^{k_n}X_{ni} - \frac{1}{n}\sum_{i=1}^{n}X_{ni}\] \xrightarrow{L} \mathcal{N}(0,1-\gamma),\;\; n \to \infty.
\end{equation*}
\end{lemma}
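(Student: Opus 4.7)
The plan is to recast the statistic as the terminal value of a reverse-martingale sum and appeal to a martingale central limit theorem, in the spirit of the Weber ``martingale approach'' referenced in the statement. Set $S_k = \sum_{i=1}^k X_{ni}$, $U_k = S_k/k$, and observe the telescoping identity
\begin{equation*}
\sqrt{k_n}\!\left[\frac{S_{k_n}}{k_n} - \frac{S_n}{n}\right] = \sqrt{k_n}(U_{k_n}-U_n) = \sum_{k=k_n}^{n-1}\xi_{n,k},\qquad \xi_{n,k} := \sqrt{k_n}(U_k - U_{k+1}).
\end{equation*}
Row-wise exchangeability gives $\mathbb{E}[X_{n,k+1}\mid\mathcal{G}_{k+1}] = U_{k+1}$ for the filtration $\mathcal{G}_k := \sigma(S_k, X_{n,k+1},\ldots,X_{nn})$ (note $\mathcal{G}_k \supseteq \mathcal{G}_{k+1}$), and a direct algebraic manipulation gives the clean representation $U_k - U_{k+1} = (U_{k+1} - X_{n,k+1})/k$, so $(U_k)$ is a reverse martingale for $(\mathcal{G}_k)$. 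The target statistic then falls into the scope of a reverse-martingale CLT such as Hall and Heyde's Theorem~3.2 or McLeish's criterion written in reverse time.

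The first nontrivial step is the conditional variance. A direct computation yields
\begin{equation*}
\sum_{k=k_n}^{n-1}\mathbb{E}\!\left[\xi_{n,k}^2\mid\mathcal{G}_{k+1}\right] = k_n\sum_{k=k_n}^{n-1}\frac{\mathrm{Var}(X_{n,k+1}\mid\mathcal{G}_{k+1})}{k^2},
\end{equation*}
and the goal is to show that this converges in probability to $1-\gamma$. Heuristically, $\mathrm{Var}(X_{n,k+1}\mid\mathcal{G}_{k+1}) = \mathbb{E}[X_{n,k+1}^2\mid\mathcal{G}_{k+1}] - U_{k+1}^2 \approx 1$; once this is established uniformly in $k\in[k_n,n-1]$, the deterministic Riemann sum $k_n\sum_{k=k_n}^{n-1}k^{-2}\to 1-\gamma$ delivers the claimed variance. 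Uniform control of the conditional second moment follows by applying a reverse-martingale maximal inequality to the process $k\mapsto k^{-1}\sum_{i=1}^k X_{ni}^2$ (itself a reverse martingale by exchangeability), whose value at $k=k_n$ is pinned to $1$ by assumption~(3); meanwhile the subtracted $U_{k+1}^2$ is made negligible by assumption~(1), through the identity $\mathbb{E}[U_{k+1}^2] = \mathbb{E}[X_{n1}^2]/(k+1) + (k/(k+1))\mathbb{E}[X_{n1}X_{n2}]$.

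The Lindeberg/uniform-asymptotic-negligibility condition demands $\max_{k_n\leq k\leq n-1}|\xi_{n,k}|\xrightarrow{P}0$. The crude bound $|\xi_{n,k}|\leq 2\sqrt{k_n}\max_i|X_{ni}|/k \leq 2\max_i|X_{ni}|/\sqrt{k_n}$, combined with assumption~(2) and the fact that $k_n/n\to\gamma$, settles the case $\gamma\in(0,1)$ immediately. The main obstacle I anticipate is the boundary case $\gamma=0$, where $\sqrt{k_n}$ can be much smaller than $\sqrt{n}$ and the sup bound is too weak: here one must truncate $X_{n,k+1}$ at some level $\delta\sqrt{k_n}$, control the tail contribution via assumption~(3) and Markov's inequality, and re-verify that both the conditional-variance and Lindeberg conditions survive for the truncated martingale differences. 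With the two conditions in hand, the reverse-martingale CLT yields $\sqrt{k_n}(U_{k_n}-U_n)\xrightarrow{L}\mathcal{N}(0,1-\gamma)$, which is the claimed conclusion.
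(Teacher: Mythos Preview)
The paper does not supply its own proof of this lemma; it simply cites Weber (1980), whose title already signals the method. Your reverse-martingale decomposition of $U_k=S_k/k$ and application of a martingale CLT is exactly the ``martingale approach'' of the cited reference, so your proposal is both correct in outline and faithful to the source the paper relies on.
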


As mentioned earlier this result provides an analog of the CLT for exchangeable sequences. However, it is important to stress its distinction from the classical CLT-type claims for the i.i.d. variables. Indeed, Lemma \ref{eq:martin_theorem} only allows us to consider a subset of the sample of cardinality $k_n$ smaller than the number of variables $n$ in the row so that even their ratio must not approach one. This is a reflection of the essential difference between non-extendable exchangeable sequences and their extendable counterparts that include i.i.d. sequences as a particular case \cite{konstantopoulos2019extendibility}.

\subsection{Additional Notation and Auxiliary Results}
\label{sec:distr_null}

Assume that an infinite i.i.d. sequence $\{\x_i\}_{i=1}^\infty$ is sampled under the composite $\mathcal{H}_0$ with the true scatter matrix is unknown. For every $n > p$, let the sequence of corresponding Tyler's estimators be
\begin{equation}
\T_n = \frac{p}{n}\sum_{i=1}^n \frac{\x_i\x_i^\top}{\x_i^\top\T_n^{-1}\x_i},\;\; n=p+1,\dots,
\end{equation}
which exist almost surely for a random sample \cite{wiesel2012geodesic, wiesel2012unified}. Consider a triangular array of row-wise exchangeable random vectors
\begin{equation}
\t_{ni} = \frac{\T_n^{-1/2}\x_i}{\norm{\T_n^{-1/2}\x_i}} \in \mathbb{S}^{p-1},\quad i=1,\dots,n,\;\; n=p+1,\dots.
\end{equation}
Note that by Definition \ref{def:ellipt_def}, the sequence $\{\x_i\}_{i=1}^\infty$ can equivalently be defined as follows. Given a sequence $\{\w_i\}_{i=1}^\infty \sim \mathcal{U}(\I_p)$ of uniform i.i.d. random vectors, we look at their transforms
\begin{equation}
\label{eq:constr_x}
\x_i = \frac{\bm\Omega^{1/2}\w_i}{\norm{\bm\Omega^{1/2}\w_i}},
\end{equation}
for some fixed but unknown $\bm\Omega \succ 0$. Define also an auxiliary sequence
\begin{equation}
\t_{\infty i} = \w_i.
\end{equation}
\begin{lemma}
\label{lem:slln_mean}
With the notation introduced above,
\begin{equation}
\t_{ni} \xrightarrow{a.s.} \t_{\infty i},\;\; n\to\infty.
\end{equation}
\end{lemma}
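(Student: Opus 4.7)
The plan is to reduce the claim to two standard ingredients: the almost-sure consistency of Tyler's estimator (Theorem 3.1 of \cite{tyler1987distribution}), combined with the continuous mapping theorem applied to the normalization map $\A \mapsto \A^{-1/2}\x/\norm{\A^{-1/2}\x}$ on the open cone of positive definite matrices. The only subtlety is that $\bm\Omega$ is defined only up to a positive multiplicative constant, but this map is invariant under $\A \mapsto c\A$ for any $c>0$, which sidesteps that issue.

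First I would use the generating representation (\ref{eq:constr_x}) to rewrite, for each fixed $i$,
\begin{equation*}
\t_{ni} = \frac{\T_n^{-1/2}\x_i}{\norm{\T_n^{-1/2}\x_i}} = \frac{\T_n^{-1/2}\bm\Omega^{1/2}\w_i}{\norm{\T_n^{-1/2}\bm\Omega^{1/2}\w_i}},
\end{equation*}
where the positive scalar $\norm{\bm\Omega^{1/2}\w_i}$ cancels between numerator and denominator. Since $\w_i\in\mathbb{S}^{p-1}$ is nonzero and $\bm\Omega\succ 0$, the right-hand side is a continuous function of $\T_n$ on the positive definite cone.

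Next I would invoke Tyler's theorem to pick a probability-one event on which $\T_n$ converges to some positive multiple of $\bm\Omega$; with both $\T_n$ and $\bm\Omega$ trace-normalized to $p$ this multiple equals $1$, so $\T_n\to\bm\Omega$ almost surely. Continuity of matrix inverse and square root on the positive definite cone then yields $\T_n^{-1/2}\bm\Omega^{1/2}\to\I$ almost surely, and substituting in the displayed formula gives $\t_{ni}\to\w_i/\norm{\w_i}=\w_i=\t_{\infty i}$ on this event. I do not anticipate any real obstacle; the only bookkeeping required is to verify that the trace-normalization scalar from Tyler's theorem matches up between $\T_n$ and $\bm\Omega$, and by the scale invariance noted above any inconsistency would harmlessly cancel inside the quotient anyway.
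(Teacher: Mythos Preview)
Your proposal is correct and follows essentially the same route as the paper's own proof: invoke Theorem~3.1 of \cite{tyler1987distribution} to get $\T_n\xrightarrow{a.s.}\bm\Omega$, deduce $\T_n^{-1/2}\bm\Omega^{1/2}\xrightarrow{a.s.}\I_p$, and substitute into the representation $\t_{ni}=\T_n^{-1/2}\bm\Omega^{1/2}\w_i/\norm{\T_n^{-1/2}\bm\Omega^{1/2}\w_i}$. Your additional remarks on scale invariance and the continuous mapping theorem are sound and merely make explicit what the paper leaves implicit.
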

\begin{proof}
The proof can be found in the Appendix.
\end{proof}

We are now interested in the empirical distributions of the rows of the obtained triangular array, which are the finite sets $\{\t_{ni}\}_{i=1}^n$ for every fixed $n > p$. The following CLT-type result holds in our scenario.

\begin{prop}
\label{prop:main_prop}
For the triangular array of vectors $\{\t_{ni}\}_{n=p+1,i=1}^{\infty,n}$ defined above,
\begin{equation}
\sqrt{p}\cdot\frac{1}{\sqrt{n}}\sum_{i=1}^{n}\t_{ni}\; \xrightarrow{L}\; \mathcal{N}(0,\I),\;\; n\to\infty.
\end{equation}
\end{prop}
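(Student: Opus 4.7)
The plan is to reduce the multivariate claim to a scalar one via the Cram\'er-Wold device, invoke the exchangeable CLT (Lemma~\ref{eq:martin_theorem}) on the resulting row-wise exchangeable scalar array, and then bridge the partial-sum-versus-full-sum conclusion of that lemma to a genuine CLT for the full sum. Fix an arbitrary $\u\in\mathbb{S}^{p-1}$ and set $X_{ni}:=\sqrt{p}\,\u^{\top}\t_{ni}$, so that the goal becomes $n^{-1/2}\sum_{i=1}^{n}X_{ni}\xrightarrow{L}\mathcal{N}(0,1)$. The key structural ingredient, obtained by conjugating Tyler's fixed-point equation by $\T_{n}^{-1/2}$, is the pathwise identity $\frac{p}{n}\sum_{i=1}^{n}\t_{ni}\t_{ni}^{\top}=\I$, which rewritten as $\frac{1}{n}\sum_{i=1}^{n}X_{ni}^{2}=1$ supplies the exact variance normalization required by Lemma~\ref{eq:martin_theorem}.

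Next I verify the three hypotheses of Lemma~\ref{eq:martin_theorem} for any $k_{n}$ with $k_{n}/n\to\gamma\in(0,1)$, e.g.\ $k_{n}=\lceil n/2\rceil$. Condition~(2) is immediate from the uniform bound $|X_{ni}|\leqslant\sqrt{p}$. For Condition~(1), Lemma~\ref{lem:slln_mean} yields $\t_{ni}\xrightarrow{a.s.}\w_{i}$, so bounded convergence together with independence of $\w_{1}$ and $\w_{2}$ gives $\EE{X_{n1}X_{n2}}\to p\,\EE{\u^{\top}\w_{1}}\EE{\u^{\top}\w_{2}}=0$. For Condition~(3), I apply Lemma~\ref{lem:SLLN} to the centered exchangeable array $X_{ni}^{2}-1$: its limiting sequence $p(\u^{\top}\w_{i})^{2}-1$ is bounded, i.i.d.\ and mean-zero, so the three hypotheses of Lemma~\ref{lem:SLLN} follow from the a.s.\ convergence combined with the uniform bound on $X_{ni}$.

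Lemma~\ref{eq:martin_theorem} then delivers
$$
a_{n}-\sqrt{k_{n}/n}\,b_{n}\xrightarrow{L}\mathcal{N}(0,1-\gamma),\qquad a_{n}:=\frac{1}{\sqrt{k_{n}}}\sum_{i=1}^{k_{n}}X_{ni},\quad b_{n}:=\frac{1}{\sqrt{n}}\sum_{i=1}^{n}X_{ni}.
$$
Writing $\tilde a_{n}:=(n-k_{n})^{-1/2}\sum_{i=k_{n}+1}^{n}X_{ni}$ and using the algebraic identity $b_{n}=\sqrt{k_{n}/n}\,a_{n}+\sqrt{1-k_{n}/n}\,\tilde a_{n}$, the displayed conclusion rearranges to $\sqrt{1-k_{n}/n}\,a_{n}-\sqrt{k_{n}/n}\,\tilde a_{n}\xrightarrow{L}\mathcal{N}(0,1)$. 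Taking $\gamma=1/2$ makes $a_{n}$ and $\tilde a_{n}$ identically distributed by exchangeability, and I would establish joint asymptotic Gaussianity of $(a_{n},\tilde a_{n})$ by applying Lemma~\ref{eq:martin_theorem} once more to arbitrary linear combinations $\alpha X_{ni}\mathbf{1}_{i\leqslant k_{n}}+\beta X_{ni}\mathbf{1}_{i>k_{n}}$, which are themselves row-wise exchangeable with the same uniform bound and for which the three hypotheses follow from those already established. The limiting law of $b_{n}$ is then read off from the decomposition, and a final Cram\'er-Wold reduction in $\mathbb{R}^{p}$ upgrades the scalar CLT to the advertised $\mathcal{N}(0,\I)$ limit.

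The main obstacle is precisely this extraction step: Lemma~\ref{eq:martin_theorem} speaks only to a difference of partial- and full-sample means, not to either sum in isolation, and the seemingly natural coupling $\t_{ni}\leftrightarrow\w_{i}$ suggested by Lemma~\ref{lem:slln_mean} fails to close the gap because $\|\T_{n}-\bm\Omega\|=O(n^{-1/2})$ produces per-index errors whose accumulation across $n$ samples is $O(1)$ after the $n^{-1/2}$ normalization and so cannot be neglected. Consequently, the asymptotic Gaussianity of the full sum must be obtained intrinsically within the exchangeable framework rather than by transfer from the i.i.d.\ array $\{\w_{i}\}$; the pathwise Tyler identity is what makes this feasible by rigidly fixing the limiting second moment and enabling the joint-limit argument for $(a_{n},\tilde a_{n})$.
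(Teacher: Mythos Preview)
Your overall architecture---Cram\'er--Wold reduction, verification of the three hypotheses of Lemma~\ref{eq:martin_theorem} via Lemma~\ref{lem:slln_mean} and the Tyler identity, and the recognition that the real difficulty is extracting a full-sum CLT from the partial-versus-full-sum conclusion---matches the paper exactly. The gap is in your extraction step. The array $Z_{ni}:=\alpha X_{ni}\mathbf{1}_{i\leqslant k_n}+\beta X_{ni}\mathbf{1}_{i>k_n}$ is \emph{not} row-wise exchangeable for general $\alpha\neq\beta$: permuting an index from the first block with one from the second changes the multiplier attached to that sample, and exchangeability of $\{X_{ni}\}$ alone cannot repair this. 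Consequently Lemma~\ref{eq:martin_theorem} does not apply to $\{Z_{ni}\}$, and the joint asymptotic Gaussianity of $(a_n,\tilde a_n)$ is left unproven.

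The missing idea is the \emph{sign symmetry} of the problem: under $\mathcal{H}_0$ each $\x_i$ may be replaced by $-\x_i$ without changing the joint law, and $\T_n$ is invariant under such flips, so $\{\t_{ni}\}$---and hence $\{X_{ni}\}$---has the same joint distribution after flipping the signs of any subset of indices. The paper exploits this by setting $k_n=n/2$ and defining $Y_{ni}=X_{ni}$ for $i\leqslant n/2$ and $Y_{ni}=-X_{ni}$ for $i>n/2$; sign symmetry makes $\{Y_{ni}\}$ genuinely exchangeable (your $Z_{ni}$ with $\alpha=1,\beta=-1$, which is the only nontrivial case where your claim survives). Applying Lemma~\ref{eq:martin_theorem} to $\{Y_{ni}\}$ and simplifying the resulting bracket yields directly
\[
\sqrt{\tfrac{n}{2}}\Bigl[\tfrac{2}{n}\sum_{i\leqslant n/2}Y_{ni}-\tfrac{1}{n}\sum_{i=1}^{n}Y_{ni}\Bigr]=\sqrt{\tfrac{n}{2}}\cdot\tfrac{1}{n}\sum_{i=1}^{n}X_{ni}\xrightarrow{L}\mathcal{N}\!\Bigl(0,\tfrac12\Bigr),
\]
which is the full-sum CLT you want. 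So your plan becomes correct once you (i) restrict to $|\alpha|=|\beta|$ and (ii) justify exchangeability of the sign-flipped array via the antipodal symmetry of the ACG law and the evenness of Tyler's fixed-point equation.
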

\begin{proof}
The proof can be found in the Appendix.
\end{proof}

\begin{corollary}
\label{cor:diff_clt}
Under $\mathcal{H}_0$, for any differentiable function $f\colon\mathbb{S}^{p-1}\to\mathbb{R}$,
\begin{equation}
\sqrt{p}\cdot\frac{1}{\sqrt{n}}\sum_{i=1}^{n}f(\t_{ni})\; \xrightarrow{L}\; \mathcal{N}(0, \norm{\nabla f(0)}^2),\;\; n\to\infty.
\end{equation}
\end{corollary}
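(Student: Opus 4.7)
The plan is to deduce the corollary directly from Proposition~\ref{prop:main_prop} by Taylor-expanding $f$ around the origin, viewed after the standard radial extension as an $\mathbb{R}^p$-valued map on a neighborhood of $\mathbb{S}^{p-1}$, and then transferring the joint limit to the scalar functional via the Cram\'er--Wold / continuous mapping theorem. Setting $S_n=\sqrt{p/n}\sum_{i=1}^n\t_{ni}$, Proposition~\ref{prop:main_prop} reads $S_n\xrightarrow{L}Z\sim\mathcal{N}(\0,\I)$. Writing $f(t)=f(0)+\nabla f(0)^\top t+r(t)$ with $r(t)=o(\|t\|)$ as $t\to 0$, the linear part after the $\sqrt{p/n}$-scaling is exactly $\nabla f(0)^\top S_n$, and by continuous mapping it converges in distribution to $\nabla f(0)^\top Z=\mathcal{N}(0,\|\nabla f(0)\|^2)$. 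It therefore suffices to show that the residual contribution $\sqrt{p/n}\sum_i[f(0)+r(\t_{ni})]$ is $o_P(1)$, whereupon Slutsky closes the argument.

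The constant term $f(0)$ is absorbed in the centering implicit in the statement (equivalently, one normalizes $f$ so that $f(0)=0$). The genuinely nonlinear piece $r$ is where the special structure of $\{\t_{ni}\}$ enters. Conjugating Tyler's fixed-point equation (\ref{eq:tyler_def}) by $\T_n^{-1/2}$ yields the deterministic identity
\begin{equation*}
\frac{1}{n}\sum_{i=1}^n\t_{ni}\t_{ni}^\top=\frac{1}{p}\I,
\end{equation*}
which annihilates the quadratic Taylor contribution of $r$ \emph{exactly} (not merely in mean), so only cubic-and-higher remainders need to be controlled. For those I would combine Lemma~\ref{lem:slln_mean}, which gives $\t_{ni}\xrightarrow{a.s.}\w_i$ with $\{\w_i\}$ i.i.d.\ uniform on $\mathbb{S}^{p-1}$, with the exchangeable strong law of Lemma~\ref{lem:SLLN}, applied to the row-wise exchangeable array $\{r(\t_{ni})-\mathbb{E}[r(\w_i)]\}$. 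Its three hypotheses, namely almost sure convergence of the first entry, vanishing variance gap, and asymptotic pairwise uncorrelatedness, should all be checked from the same ingredients to be assembled in the proof of Proposition~\ref{prop:main_prop}.

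The main obstacle is precisely this remainder control: since each $\t_{ni}$ lives on the unit sphere, the pointwise Taylor estimate $r(t)=o(\|t\|)$ gives \emph{no} local smallness whatsoever, and one cannot hope to bound $r(\t_{ni})$ by its argument. What saves the plan is a \emph{global} averaging argument: the first and second empirical moments of $\{\t_{ni}\}_{i=1}^n$ are pinned to their uniform-sphere values exactly by Tyler's equation, while higher moments concentrate at the rates delivered by the exchangeable CLT of Lemma~\ref{eq:martin_theorem} with a subsequence $k_n/n\to 0$. Making this quantitative at the $\sqrt{n}$-scale is the technical heart of the argument; once the $o_P(1)$ bound for the remainder is in place, the decomposition above together with Slutsky's theorem delivers the asserted Gaussian limit with variance $\|\nabla f(0)\|^2$.
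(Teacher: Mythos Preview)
Your overall strategy---Maclaurin-expand $f$, read off the Gaussian limit of the linear part from Proposition~\ref{prop:main_prop}, and kill the remainder---is exactly what the paper's one-line proof (``follows the i.i.d.\ case verbatim using the Maclaurin expansion of $f$'') intends. In that sense your plan is aligned with the paper.

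There is, however, a concrete error in your remainder analysis. You assert that the Tyler identity $\frac{1}{n}\sum_i\t_{ni}\t_{ni}^\top=\frac{1}{p}\I$ ``annihilates the quadratic Taylor contribution of $r$ exactly''. It does not. Writing $H=\nabla^2 f(0)$, the quadratic piece contributes
\[
\sqrt{\frac{p}{n}}\sum_{i=1}^n \tfrac{1}{2}\,\t_{ni}^\top H\,\t_{ni}
=\sqrt{\frac{p}{n}}\cdot\frac{n}{2}\,\Tr{H\cdot\tfrac{1}{n}\sum_i\t_{ni}\t_{ni}^\top}
=\sqrt{\frac{p}{n}}\cdot\frac{n}{2p}\,\Tr{H}
=\frac{\sqrt{n}}{2\sqrt{p}}\,\Tr{H},
\]
which is deterministic but \emph{diverges} like $\sqrt{n}$ unless $\Tr{H}=0$. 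The Tyler identity makes this term non-random; it does not make it small. The same obstruction appears at every even order: for a degree-$2r$ monomial the exact moment constraint you would need is much stronger than the single second-moment identity Tyler provides. So your proposed mechanism for disposing of the remainder fails as written.

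What actually rescues the argument is the correct centering: the deterministic blow-up disappears if you replace $f$ by $f-\mathbb{E}_{\w\sim\mathcal{U}(\I)}[f(\w)]$ rather than by $f-f(0)$. With that centering the quadratic contribution becomes $\sqrt{p/n}\sum_i\big(\tfrac{1}{2}\t_{ni}^\top H\t_{ni}-\tfrac{1}{2p}\Tr{H}\big)$, and \emph{now} the Tyler identity kills it exactly---this is the sense in which the exchangeable case tracks the i.i.d.\ one ``verbatim''. As stated, the corollary (and your treatment of the constant term) conflates $f(0)$ with the uniform mean; in the paper's applications (Ajne: odd Gegenbauer polynomials) these coincide and all even-order terms vanish anyway, which is why the issue is invisible there. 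For your write-up you should either (i) add the hypothesis that the even-degree Maclaurin coefficients of $f$ match those of a function with zero uniform mean, or (ii) center by $\mathbb{E}[f(\w)]$ and then invoke the Tyler identity and the higher-order exchangeable limit theorems as you outlined.
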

\begin{proof}
The proof follows the i.i.d. case verbatim using the Maclaurin expansion of $f$.
\end{proof}

\section{Asymptotic Uniformity Tests for Exchangeable Vectors}
\label{sec:asymp_unif_exch}
In Section \ref{sec:unif_test} we introduced statistics $t_A$ and $t_G$ to test the null hypothesis of uniformity for independent samples over the unit sphere $\mathbb{S}^{p-1}$. Our next statements constitute analogs of those result for the row-wise exchangeable array $\{\t_{ni}\}_{n=p+1,i=1}^{\infty,n}$. Denote
\begin{equation}
\label{eq:psi_n_not}
\psi_{n,ij} = \arccos(\t_{ni}^\top\t_{nj}).
\end{equation}

\begin{prop}[Generalized Ajne Test for $\t_{ni}$]
\label{prop:ajne_exch}
Under $\mathcal{H}_0$, the Ajne statistic
\begin{equation}
t_{A}\(\{\t_{ni}\}\) = \frac{n}{4}-\frac{1}{\pi n}\sum_{i<j}\psi_{n,ij}
\end{equation}
is asymptotically distributed as $\mathcal{L}\(\sum_{q=1}^\infty a_{2q-1}^2K_{\nu(p-1,2q-1)}\)$ as $n\to\infty$, where $K_{\xi}$ are independent random variables distributed as $\chi_{\xi}^2$ and
\begin{equation}
a_{2q-1} = \frac{(-1)^{q-1}2^{p-2}\Gamma(\alpha+1)\Gamma(q+\alpha)(2q-2)}{\pi(q-1)!(2q+p-3)!}.
\end{equation}
\end{prop}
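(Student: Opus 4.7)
The plan is to reduce the Ajne statistic to a quadratic functional of empirical spherical-harmonic coefficients and then feed each coefficient through Corollary~\ref{cor:diff_clt}, mimicking Prentice's i.i.d.\ argument while swapping the classical CLT for the exchangeable-CLT toolbox developed in Section~\ref{sec:distr_null}.

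First, I would start from the integral representation
$$t_A(\{\t_{ni}\}) \;=\; \frac{1}{n V_{p-1}} \int_{\y \in \mathbb{S}^{p-1}} \left(N(\y) - \tfrac{n}{2}\right)^2 d\y,$$
which displays $t_A$ as a V-statistic with a zonal kernel $h(\u,\v)$ depending only on $\u^\top\v$. By Funk--Hecke (equivalently, the Gegenbauer addition theorem) this kernel admits a Mercer-type expansion
$$h(\u,\v) \;=\; \sum_{q=1}^\infty a_{2q-1}^2 \sum_{\ell=1}^{\nu(p-1,2q-1)} Y_{2q-1,\ell}(\u)\, Y_{2q-1,\ell}(\v),$$
in which only odd degrees survive and the coefficients reproduce precisely the $a_{2q-1}$ in the statement; this computation is identical to the one in \cite{ajne1968simple,prentice1978invariant}. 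Substituting and peeling off the diagonal contributions yields
$$t_A(\{\t_{ni}\}) \;=\; \sum_{q=1}^\infty a_{2q-1}^2 \sum_{\ell=1}^{\nu(p-1,2q-1)} \left(\frac{1}{\sqrt{n}}\sum_{i=1}^n Y_{2q-1,\ell}(\t_{ni})\right)^2 + o_P(1).$$

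Second, for each fixed pair $(q,\ell)$ the spherical harmonic $Y_{2q-1,\ell}$ is smooth on $\mathbb{S}^{p-1}$, so Corollary~\ref{cor:diff_clt} produces a Gaussian limit for the inner sum. The Cram\'er--Wold device, together with the asymptotic pairwise uncorrelatedness $\mathbb{E}[\t_{n1}\t_{n2}^\top] \to 0$ encoded in Proposition~\ref{prop:main_prop} (which, through Maclaurin expansion and the orthogonality of spherical harmonics on the sphere, lifts to $\mathbb{E}[Y_{k,\ell}(\t_{n1})Y_{k',\ell'}(\t_{n2})] \to 0$), upgrades these marginal limits to joint convergence with a diagonal covariance. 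The prefactor $\sqrt{p}$ of Corollary~\ref{cor:diff_clt} combines with the harmonic gradients exactly so as to produce unit variances, so that summing over $\ell = 1,\dots,\nu(p-1,2q-1)$ delivers an independent $\chi^2_{\nu(p-1,2q-1)}$ block $K_{\nu(p-1,2q-1)}$ for each $q$. The continuous mapping theorem then assembles the claimed weighted sum, and a tail-truncation argument using the rapid decay of $a_{2q-1}^2$ and the second-moment bounds inherent in Proposition~\ref{prop:main_prop} propagates convergence of finite truncations of the series to convergence of the full series.

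The main obstacle is the joint-convergence step: Corollary~\ref{cor:diff_clt} is a univariate statement, whereas the argument above requires the vector of harmonic empirical means to converge jointly to a Gaussian with diagonal covariance. Verifying this calls for revisiting the proof of Proposition~\ref{prop:main_prop} (which itself rests on Lemma~\ref{eq:martin_theorem}) and checking that the pairwise decorrelation $\mathbb{E}[\t_{n1}\t_{n2}^\top]\to 0$ continues to hold after the linear factors are replaced by products of higher-degree spherical harmonics. Once that lift is in hand, the remainder is analytic bookkeeping strictly parallel to the Ajne/Prentice i.i.d.\ derivation.
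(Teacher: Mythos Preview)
Your proposal is correct and follows essentially the same route as the paper: the paper's proof is the single sentence ``The proof follows \cite{gine1975invariant} and \cite{prentice1978invariant} verbatim using Corollary~\ref{cor:diff_clt},'' and what you have written is precisely an unpacking of that sentence---the Gegenbauer/spherical-harmonic Mercer expansion of the zonal Ajne kernel (only odd degrees), followed by substitution of the exchangeable CLT (Corollary~\ref{cor:diff_clt}) for the classical CLT in Prentice's argument. Your flagging of the joint-convergence issue is a fair observation that the paper leaves implicit under the word ``verbatim.''
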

\begin{proof}
The proof follows \cite{gine1975invariant} and \cite{prentice1978invariant} verbatim using Corollary \ref{cor:diff_clt}.
\end{proof}

\begin{definition}
Random variable $X$ is said to be first-order stochastically dominated by random variable $Y$ if
\begin{equation}
\mathbb{P}[X \in A] \leqslant \mathbb{P}[Y \in A],
\end{equation}
for any measurable set $A$.
\end{definition}

\begin{prop}[Generalized Gin{\'e} Test for $\overline{\t}_{nk}$]
\label{prop:gine_exch}
Under $\mathcal{H}_0$, the Gin{\'e} statistic
\begin{equation}
\label{eq:gine_stat_def}
t_{G}\(\{\t_{ni}\}\) = \frac{n}{2}-\frac{p-1}{2n}\(\frac{\Gamma\(\alpha+\frac{1}{2}\)}{\Gamma(\alpha+1)}\)^2\sum_{i<j}\sin(\psi_{n,ij})
\end{equation}
is asymptotically first-order stochastically dominated by the random variable distributed as $\sum_{q=1}^\infty a_{2q}^2K_{\nu(p-1,2q)}$, where $K_{\xi}$ are independent random variables distributed as $\chi_{\xi}^2$ and
\begin{equation}
a_{2q}^2 = \frac{(p-1)(2q-1)}{8\pi(2q+p-1)}\(\frac{\Gamma\(\alpha+\frac{1}{2}\)\Gamma\(q-\frac{1}{2}\)}{\Gamma\(q+\alpha+\frac{1}{2}\)}\)^2,
\end{equation}
and
\begin{equation*}
\mathbb{E}\[t_{G}\(\{\w_i\}\)\] - \mathbb{E}\[t_{G}\(\{\t_{ni}\}\)\] \sim \frac{1}{8} + \frac{1}{16p} + O\(\frac{1}{p^2}\), \;\; n\to \infty.
\end{equation*}
\end{prop}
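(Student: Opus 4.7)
I would follow the classical Gin\'e--Prentice strategy with two adaptations: (i) replace the i.i.d.\ CLT by the exchangeable CLT of Lemma~\ref{eq:martin_theorem} (via Corollary~\ref{cor:diff_clt}), and (ii) exploit the algebraic identity $\frac{1}{n}\sum_i\t_{ni}\t_{ni}^\top = \frac{1}{p}\I$, which is obtained by rewriting (\ref{eq:tyler_def}) in the normalised vectors $\t_{ni}$.

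First I would expand $\sqrt{1-x^2}$ on $[-1,1]$ in Gegenbauer polynomials with respect to the weight $(1-x^2)^{\alpha-1/2}$; evenness forces only the components $C_{2q}^\alpha$ to survive, and their squared $L^2$-coefficients reproduce the constants $a_{2q}^2$ of the statement. By the addition theorem for spherical harmonics, each pair sum $\sum_{i<j}C_{2q}^\alpha(\t_{ni}^\top\t_{nj})$ becomes a sum of squared norms $\tfrac{1}{2}\|\sum_i Y_{2q,\ell}(\t_{ni})\|^2$ over an orthonormal degree-$2q$ harmonic basis $\{Y_{2q,\ell}\}_{\ell=1}^{\nu(p-1,2q)}$, modulo a deterministic centering. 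Applying Corollary~\ref{cor:diff_clt} coordinate-wise gives a Gaussian limit for each $Y_{2q,\ell}$, but Lemma~\ref{eq:martin_theorem} caps the limiting variance at $(1-\gamma)$ times its i.i.d.\ value for any admissible $\gamma\in[0,1)$. Each harmonic block therefore converges to a scaled $\chi^2_{\nu(p-1,2q)}$ that is stochastically dominated by the unscaled $K_{\nu(p-1,2q)}$. Letting $\gamma\to 0^{+}$ and assembling the asymptotically independent blocks (distinct-degree harmonics are orthogonal in the Gaussian limit) delivers the claimed stochastic domination. The $q=1$ block is in fact identically zero, because $\sum_{i,j}(\t_{ni}^\top\t_{nj})^2 = n^2/p$ is \emph{deterministic} upon squaring Tyler's identity, so the domination is strict.

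For the expectation gap I would substitute the Taylor series $\sqrt{1-x^2} = 1 - x^2/2 - x^4/8 - O(x^6)$ inside $\mathbb{E}\sum_{i<j}\sin(\psi_{n,ij})$. The quadratic contribution is pinned exactly by $\sum_{i<j}(\t_{ni}^\top\t_{nj})^2 = n(n-p)/(2p)$ against its i.i.d.\ counterpart $n(n-1)/(2p)$; combined with the normalising constant $\frac{p-1}{2n}\bigl(\Gamma(\alpha+\frac{1}{2})/\Gamma(\alpha+1)\bigr)^2$ and the Stirling expansion $\bigl(\Gamma((p-1)/2)/\Gamma(p/2)\bigr)^2 = 2/p + 3/p^2 + O(1/p^3)$, this yields a contribution of $1/4 - 1/(8p) + O(1/p^2)$ to the gap. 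The complementary $-1/8 + 3/(16p)$ must come from the quartic and higher moments, which I would estimate by linearising the Tyler iterate $\T_n = \bm\Omega + O_P(1/\sqrt{n})$ about its limit, propagating to $\t_{ni} = \w_i + (\text{projected perturbation}) + O_P(1/n)$, and evaluating the $O(1/n)$ bias of $\mathbb{E}(\t_{ni}^\top\t_{nj})^{2k}$ against its i.i.d.\ value; these biases, weighted by the $O(1/n)$ normalising constant and summed over $\binom{n}{2}$ pairs, combine with the quadratic term to produce the announced $1/8 + 1/(16p) + O(1/p^2)$.

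The main obstacle is this last step. Unlike the quadratic moment, the quartic and higher moments are not pinned by Tyler's identity, so obtaining the precise $1/(16p)$ coefficient requires a careful second-order perturbation expansion of Tyler's fixed-point iterate along the Fisher information geometry of the ACG model, together with combinatorial bookkeeping of which cumulant contributions survive the $\binom{n}{2}$-fold summation. The spherical-harmonic and CLT parts should be essentially routine translations of Gin\'e--Prentice; the moment refinement is where all the genuine work lies.
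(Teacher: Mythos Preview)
Your sketch contains the right central observation---the degree-$2$ harmonic block collapses because Tyler's fixed-point equation forces $\frac{1}{n}\sum_i\t_{ni}\t_{ni}^\top=\frac{1}{p}\I$ exactly---but the surrounding machinery diverges from the paper in two places, and one of them is a genuine error.

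The ``$(1-\gamma)$ variance cap'' is a misreading of Lemma~\ref{eq:martin_theorem}. That lemma compares a $k_n$-partial sum to the full $n$-sum, with $\gamma=\lim k_n/n$ fixed by the construction; it does not assert that the full sum has shrunken variance, and $\gamma$ is not a free parameter you may send to $0^{+}$. Proposition~\ref{prop:main_prop} (via a sign-symmetrisation trick that your plan overlooks) shows that $\frac{1}{\sqrt n}\sum_i\t_{ni}$ has the \emph{same} limiting covariance as the i.i.d.\ sum, and Corollary~\ref{cor:diff_clt} propagates this to smooth functionals. Hence for every $q\geqslant 2$ the harmonic block for $\{\t_{ni}\}$ has exactly the i.i.d.\ limit law, not a contracted one. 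The stochastic domination comes \emph{entirely} from the vanishing $q=1$ block, which you mention only as an aside; in the paper this is the whole argument, phrased through the monomial expansion: the quadratic piece of $t_G$ equals $\frac{\gamma_2 n}{2}\Tr{\S_{\t,n}^2}$ with $\gamma_2>0$, Tyler's equation pins $\Tr{\S_{\t,n}^2}=1/p$, and the frame-theoretic lower bound $\Tr{\S_{\w,n}^2}\geqslant 1/p$ of Lemma~\ref{lem:frame_lemma} finishes the comparison.

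For the expectation gap the paper's route is much shorter than yours and involves no perturbation of the Tyler fixed point and no quartic bookkeeping. The paper stays with the quadratic monomial: since $\Tr{\S_{\t,n}^2}=1/p$ deterministically while $\mathbb{E}\Tr{\S_{\w,n}^2}\sim\frac{1}{p}+\frac{1}{n}$ by the second moment of the Marchenko--Pastur law, the quadratic contribution to the gap is $\frac{\gamma_2 n}{2}\cdot\frac{1}{n}=\gamma_2/2$. The constant $\gamma_2$ is then identified as a telescoping series built from the $z^2$-coefficients $\zeta_{2q,2}^{\alpha}$ of the Gegenbauer polynomials $C_{2q}^{\alpha}$, and the first two terms already give $\gamma_2=\frac{1}{4}+\frac{1}{8p}+O(1/p^{2})$. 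Your plan---extracting $1/4-1/(8p)$ from the raw Taylor coefficient of $\sqrt{1-x^{2}}$ and then chasing the missing $-1/8+3/(16p)$ through a second-order Fisher-information expansion of $\T_n$---is neither the paper's argument nor necessary, and the part you flag as ``where all the genuine work lies'' is precisely the part the paper bypasses.
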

\begin{proof}
The proof can be found in the Appendix.
\end{proof}

It is important to emphasize the main difference between Propositions \ref{prop:gine_origin} and \ref{prop:gine_exch}. Indeed, in the former the asymptotic distribution is given by a sum of scaled $\chi^2$-variables, while in the second one the limiting distribution is first-order stochastically dominated by the same distribution and is in fact significantly \textit{thinner}. This discrepancy is due to the fact that $\{\t_{ni}\}$ are dependent in such a special way that their sample covariance matrix is exactly $\I$. The detailed reasoning and discussion can be found in the Appendix.

\begin{theorem}[Uniformity Test for $\t_{ni}$]
\label{thm:main_unif_hyp_test}
Under $\mathcal{H}_0$, any weighted sum of $t_A\(\{\t_{ni}\}\)$ and $t_G\(\{\t_{ni}\}\)$ is consistent against all alternatives to the asymptotic uniformity of $\{\t_{ni}\}$ on $\mathbb{S}^{p-1}$.
\end{theorem}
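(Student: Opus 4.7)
The plan is to adapt the classical consistency argument of Gin{\'e} and Prentice to the row-wise exchangeable array $\{\t_{ni}\}$, using the CLT-type result of Corollary \ref{cor:diff_clt} in place of the classical CLT. The algebraic core is the Gegenbauer expansion
\[
\tfrac{\pi}{2}-\arccos(u)=\sum_{q\geqslant 1}a_{2q-1}C^{\alpha}_{2q-1}(u),\qquad \sqrt{1-u^{2}}=\sum_{q\geqslant 1}a_{2q}C^{\alpha}_{2q}(u),
\]
applied with $u=\t_{ni}^{\top}\t_{nj}=\cos\psi_{n,ij}$, together with the addition theorem for Gegenbauer polynomials, which factorizes $C_{k}^{\alpha}(\x^{\top}\y)$ into a finite sum over an orthonormal basis of degree-$k$ spherical harmonics on $\mathbb{S}^{p-1}$. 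Substituting into the definitions of $t_{A}$ and $t_{G}$ and collecting like terms yields
\[
t_{A}(\{\t_{ni}\})=\sum_{q\geqslant 1}a_{2q-1}^{2}\,\norm{\S_{n}^{(2q-1)}}^{2},\qquad t_{G}(\{\t_{ni}\})=\sum_{q\geqslant 1}a_{2q}^{2}\,\norm{\S_{n}^{(2q)}}^{2},
\]
where $\S_{n}^{(k)}$ is the vector of normalized empirical spherical-harmonic moments $\tfrac{1}{\sqrt{n}}\sum_{i=1}^{n}Y_{k,m}(\t_{ni})$ indexed by $m$ across an orthonormal basis of degree-$k$ harmonics. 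This step is purely algebraic and, crucially, does not invoke independence, so it transfers from the i.i.d.\ treatment verbatim.

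The second step is to establish divergence under any alternative. Suppose the empirical measure $\tfrac{1}{n}\sum_{i=1}^{n}\delta_{\t_{ni}}$ does not converge weakly in probability to the uniform law on $\mathbb{S}^{p-1}$. By $L^{2}$-completeness of the spherical harmonic basis there must exist a degree $k\geqslant 1$, an index $m$, a constant $c>0$, and a subsequence along which $\bigl|\tfrac{1}{n}\sum_{i=1}^{n}Y_{k,m}(\t_{ni})\bigr|\geqslant c$ with probability bounded away from zero. On this subsequence $\norm{\S_{n}^{(k)}}^{2}\geqslant c^{2}n$. Since every coefficient $a_{2q-1}^{2}$ and $a_{2q}^{2}$ is strictly positive and $k$ is either odd (captured by $t_{A}$) or even (captured by $t_{G}$), at least one of the two statistics diverges at rate $n$. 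Under $\mathcal{H}_{0}$ both statistics are $O_{P}(1)$ by Propositions \ref{prop:ajne_exch} and \ref{prop:gine_exch}. Consequently any weighted sum with non-negative weights, at least one of which is strictly positive, diverges under the alternative while remaining tight under $\mathcal{H}_{0}$, yielding asymptotic power one.

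The principal obstacle is the degree-$2$ harmonic. Tyler's fixed-point equation (\ref{eq:tyler_def}) forces $\tfrac{p}{n}\sum_{i=1}^{n}\t_{ni}\t_{ni}^{\top}=\I$ exactly for every $n$, so the empirical spherical-harmonic moments of degree two vanish identically; this is precisely the source of the first-order stochastic domination recorded in Proposition \ref{prop:gine_exch}, where the $q=1$ summand is suppressed. One must verify that this suppression creates no blind spot for consistency. But any alternative that differed from uniformity only through degree-$2$ spherical harmonics is incompatible with the Tyler normalization and therefore is automatically excluded from the set of admissible limit laws for $\{\t_{ni}\}$. Hence every genuine departure from asymptotic uniformity must manifest in a harmonic of odd degree or of even degree $k\geqslant 4$, both of which are detected by the joint statistic, and the theorem follows.
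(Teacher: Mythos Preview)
Your proposal follows essentially the same route as the paper, which simply records that the proof ``follows \cite{gine1975invariant} and \cite{prentice1978invariant} verbatim using Propositions \ref{prop:ajne_exch} and \ref{prop:gine_exch}.'' You have supplied the details of that argument---the Gegenbauer/spherical-harmonic decomposition of $t_{A}$ and $t_{G}$, the identification of each statistic with a weighted sum of squared empirical harmonic moments, and the divergence-under-alternatives reasoning---and you correctly isolate and handle the degree-$2$ suppression forced by Tyler's fixed-point equation, which is exactly the mechanism the paper exploits in its proof of Proposition \ref{prop:gine_exch}.

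One small correction: your claim that ``any weighted sum with non-negative weights, at least one of which is strictly positive'' suffices is too weak. If only the weight on $t_{A}$ is positive, a purely even-degree alternative (say a departure only in harmonics of degree $4$) goes undetected, since $t_{A}$ involves only odd-degree Gegenbauer polynomials; symmetrically, $t_{G}$ alone is blind to odd-degree departures. Consistency against \emph{all} alternatives therefore requires both weights to be strictly positive. The paper's own statement (and the corresponding Proposition \ref{prop:main_unif_test_iid}) is phrased just as loosely, so you are not introducing a new error, but the argument you wrote does not actually support the weaker hypothesis you stated.
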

\begin{proof}
The proof follows \cite{gine1975invariant} and \cite{prentice1978invariant} verbatim using Propositions \ref{prop:ajne_exch} and \ref{prop:gine_exch}.
\end{proof}

\begin{rem}
It is also important to emphasize that an asymptotic bound to the power of the uniformity test suggested by Theorem \ref{thm:main_unif_hyp_test} against all alternatives on the unit sphere can be easily constructed using the asymptotic normality of the scaled deviations of the Ajne and Gin{\'e} statistics as shown in Section 4 of \cite{gine1975invariant}. These derivations are also valid in our case, and therefore we omit them due to a lack of space. 
\end{rem}

\section{Numerical Simulations}
\label{sec:num_sims} 
In this section, we investigate the behavior and advantages of the criterion proposed in Theorem \ref{thm:main_unif_hyp_test} through numerical simulations.
\subsection{Distributions of the Statistics under the Null Hypothesis}
In the first experiment, we compared the empirical distributions of $t_A\(\{\t_{ni}\}\)$ and $t_G\(\{\t_{ni}\}\)$ with their counterparts $t_A\(\{\w_i\}\)$ and $t_G\(\{\w_i\}\)$ for the independent samples playing the role of the benchmarks. In this simulation we took the true scatter matrix to be the identity $\bm\Omega=\I$. Figure \ref{fig:null_distr_stats} demonstrates the anticipated in Section \ref{sec:asymp_unif_exch} difference in the behavior of the Ajne and Gin{\'e} statistics. More specifically, as claimed in Proposition \ref{prop:gine_exch} and discussed in detail in its proof, the statistic $t_G\(\{\t_{ni}\}\)$ is first-order stochastically dominated by $t_G\(\{\w_i\}\)$ due to the difference in the behavior of the quadratic term in the expansion of the statistic (\ref{eq:t_g_der_sum}) caused by dependencies among $\{\t_{ni}\}$. This is in contrast to the Ajne statistic whose distributions in both cases coincide since a similar expansion into Gegenbauer polynomials involves odd degree polynomials only (for more details see the proof of Proposition \ref{prop:gine_exch}). Note also that the theoretically predicted by Theorem \ref{thm:main_unif_hyp_test} difference between the expected values for $p=8$,
\begin{equation}
\mathbb{E}\[t_{G}\(\{\w_i\}\)\] - \mathbb{E}\[t_{G}\(\{\t_{ni}\}\)\] \sim \frac{1}{8} + \frac{1}{16p} + O\(\frac{1}{p^2}\) \approx 0.133
\end{equation} 
is confirmed by the numerical simulation yielding the value of $0.131$.

\begin{figure}
\centering
\includegraphics[width=0.45\textwidth]{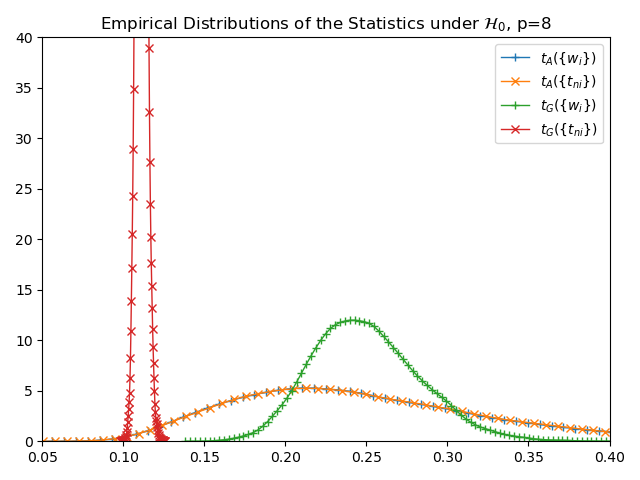}\hfill
\caption{Comparison of the empirical distributions of the Ajne and Gin{\'e} test statistics computed for the sequences $\{\w_i\}$ and $\{\t_{ni}\}$ defined in Section \ref{sec:distr_null} with the true scatter matrix being $\I$.}
\label{fig:null_distr_stats}
\end{figure}

\subsection{Criterion Performance for Alternatives}
\label{sec:altern_perf_stat}
Following Theorem \ref{thm:main_unif_hyp_test}, to demonstrate the power of the suggested methodology in our second experiment we compared the empirical distributions of the statistics
\begin{equation}
\label{eq:s_AG_def}
s\(\{\t_{ni}\}\) = t_A\(\{\t_{ni}\}\) + t_G\(\{\t_{ni}\}\),
\end{equation}
\begin{equation}
s\(\{\w_i\}\) = t_A\(\{\w_i\}\) + t_G\(\{\w_i\}\)
\end{equation}
under the null hypotheses $\mathcal{H}_0$ and $\mathcal{G}_0$ versus their distributions under specific non-elliptical alternatives $\mathcal{H}_1$ and $\mathcal{G}_1$. The alternatives were constructed as follows. We generated the uniform sequence $\{\w_i\}$ as before, added a constant offset to all the obtained vectors and re-normalized them,
\begin{equation}
\widetilde{\w}_i = \frac{\w_i + \a}{\norm{\w_i + \a}}.
\end{equation}
Note that the distributions of $\widetilde{\w}_i$ and of $\x_i$ constructed from it via (\ref{eq:constr_x})
\begin{equation}
\x_i = \frac{\bm\Omega^{1/2}\widetilde{\w}_i}{\norm{\bm\Omega^{1/2}\widetilde{\w}_i}},
\end{equation}
are not ACG and therefore our test should be able to discriminate between the hypotheses. In this experiment we chose $p=5,\, \bm\Omega = \I$ and
\begin{equation}
\a = 0.05 \cdot \frac{1}{\sqrt{p}} \begin{bmatrix} 1 \\ \vdots \\ 1\end{bmatrix}.
\end{equation}
Figure \ref{fig:conf_bands} demonstrates the $0.95$-confidence bands for the distributions of the statistics $s\(\{\t_{ni}\}\)$ and $s\(\{\w_i\}\)$ as functions of the number of measurements $n$ in the sample. The bands in Figure \ref{fig:conf_bands} were averaged over $50000$ independent trials. We see from the graph that despite the small size of $\a$, already with $n=15$ measurements the criterion allows us to easily discriminate between the hypotheses and its power is similar for both the i.i.d. (know scatter matrix) and Tyler's (unknown scatter matrix) cases.

\begin{figure*}%
    \centering
    \subfloat[$s\(\{\t_{ni}\}\)$]{{\includegraphics[width=.4\textwidth]{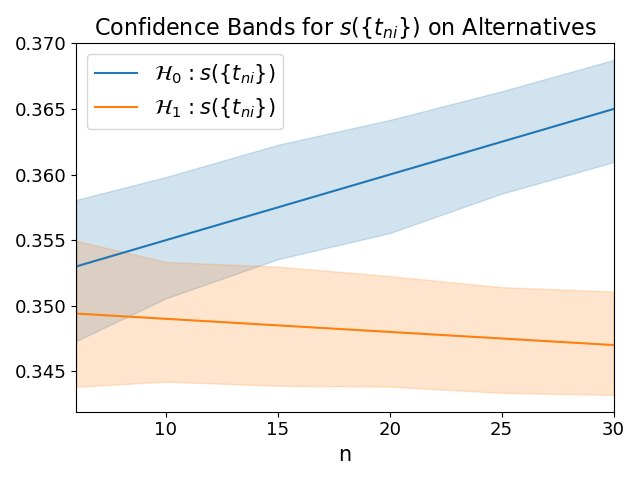} }}%
    \subfloat[$s\(\{\w_i\}\)$]{{\includegraphics[width=.4\textwidth]{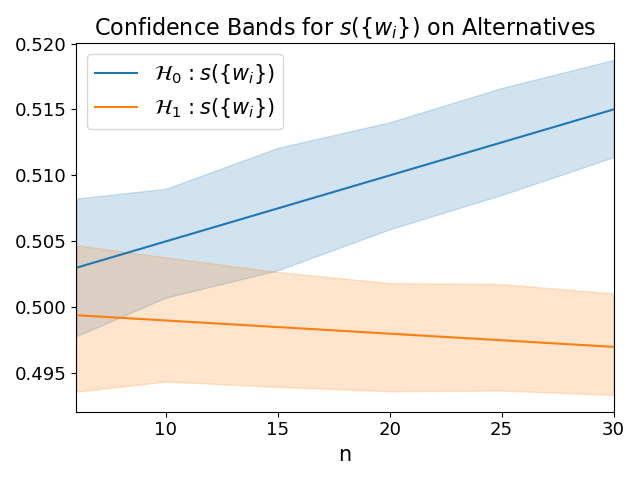} }}%
    \caption{Comparison of the $0.95$-confidence bands for $\mathcal{H}_0$ versus $\mathcal{H}_1$ designed in section \ref{sec:altern_perf_stat} for the i.i.d. $\{\w_i\}$ samples and their exchangeable $\{\t_{ni}\}$ counterparts.}
    \label{fig:conf_bands}
\end{figure*}

\section{Comparison to Other Tests}
\label{sec:comparison}
For the purpose of this comparative study, we selected the most popular tests used in the literature due to Koltchinskii and Sakhanenko \cite{koltchinskii2000testing, sakhanenko2008testing}, Manzotti et al. (MPQ test) \cite{manzotti2002statistic}, Cassart (PseudoGaussian test) \cite{cassart2008optimal}, Schott \cite{schott2002testing}, and Babic (SkewOptimal test) \cite{babic2019optimal}. Implementations of all these tests are available online trough the \textit{ellipticalsymmetry}\footnote{https://cran.r-project.org/package=ellipticalsymmetry} package developed by \cite{babic2020elliptical}.

In our numerical experiment, we set the dimension to $p=5$ and the number of samples in every batch $n=50$. Each test from the aforementioned list and the tests proposed in this paper (from Propositions \ref{prop:ajne_exch} and \ref{prop:gine_exch} and Theorem \ref{thm:main_unif_hyp_test}) is invoked to classify each such batch and determine whether it is coming from an elliptical population or not. The goal is to compare the powers of the tests using their ROC (Receiver Operating Characteristic) curves. To this end, we constructed $N=5000$ sample batches (each having $n$ vectors of dimension $p$), half of which were coming from the standard normal population and labeled as \textit{elliptical batch}, and the rest form a non-elliptical population labeled as \textit{non-elliptical batch} and defined as follows. Let
\begin{equation}
\X_{i,k} \sim \mathcal{N}(0, \I_p),\quad i=1,\dots,N,\; k=1,\dots,n,
\end{equation}
be $N$ batches of $n$ vectors each with all $\X_{i,k}$ i.i.d. standard normal. Now, we set
\begin{equation}
\Y_i = \X_i,\quad i=1,\dots,N/2
\end{equation}
for the first $N/2$ batches labeled as \textit{elliptical batch} and another $N/2$ batches marked as \textit{non-elliptical batch} we sample from a non-elliptical population designed as follows. Let
\begin{multline}
\Z_{j,i,k} \sim \mathcal{N}(\z_j, \I_p), \\ j=1,2,\; i=N/2+1,\dots,N,\; k=1,\dots,n,
\end{multline}
with the population means
\begin{equation}
\z_1 = 5\cdot\begin{bmatrix} 1 \\ 1 \\ 1 \\ 1 \\ \vdots \\ 1 \\ 1\end{bmatrix}, \quad
\z_2 = 5\cdot\begin{bmatrix} 1 \\ -1 \\ 1 \\ -1 \\ \vdots \\ 1 \\ -1\end{bmatrix}
\end{equation}
be all i.i.d. and define
\begin{equation}
\Y_i = \X_i + \Z_1 + \Z_2,\quad i=N/2+1,\dots,N.
\end{equation}
Clearly, for $N/2+1,\dots,N$  the vectors in the corresponding batches $\Y_i$ are non-elliptical and dont become GE even if normalized by their Euclidean norms. Therefore, we would expect all our tests to discriminate between \textit{elliptical} and \textit{non-elliptical} batches.

\begin{figure}
\centering
\includegraphics[width=0.45\textwidth]{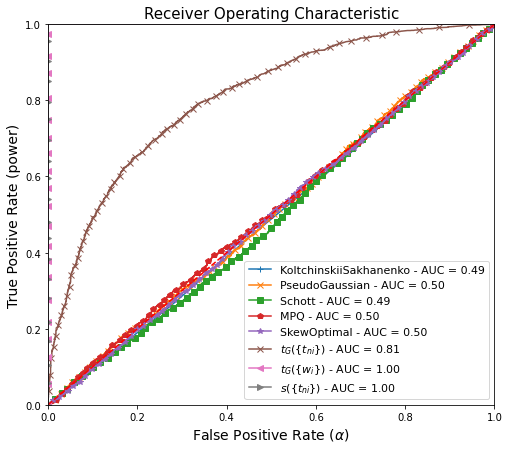}\hfill
\caption{ROC curves of Koltchinskii-Sakhanenko, MPQ, PseudoGaussian, Schott, and SkewOptimal tests in comparison to the test statistic $s\(\{\t_{ni}\}\)$ defined in (\ref{eq:s_AG_def}) and justified by Theorem \ref{thm:main_unif_hyp_test}.}
\label{fig:comparison_rocs}
\end{figure}

Figure \ref{fig:comparison_rocs} shows the ROC (Receiver Operating Characteristic) curves for all the available tests.
Naturally, the highest (having maximal power $\beta$) curve at each point on the horizontal axis corresponds to the most powerful test at this False Positive Rate ($\alpha$). We can see that the tests proposed in this article are more powerful than the rest. The distributions of the statistics $t_A\(\{\t_{ni}\}\), t_G\(\{\t_{ni}\}\), s\(\{\t_{ni}\}\)$ under $\mathcal{H}_0$ derived in Propositions \ref{prop:ajne_exch} and \ref{prop:gine_exch} were estimated empirically for this numerical study. We would like to emphasize a number of points. First, the ROC curves of all the reference tests are very close to the bisector of the first quadrant, showing their very low statistical power and making them almost equivalent to fair coin tossing. Second, every test involving $t_G\(\{\t_{ni}\}\)$ has a horizontal ROC curve and statistical power of $1$ for every $\alpha$, meaning that it correctly classified every batch. This is due to the sharp concentration of the distribution of this statistic around a constant value value as can be seen in Figure \ref{fig:null_distr_stats}. Because of that, any deviation from the expected value is a strong evidence against the ellipticity hypothesis.

\section{Conclusion}
\label{sec:concl}
In this paper we propose a novel elliptical symmetry test based on the ideas of robust statistics, and specifically Tyler's estimator of covariance matrix. This is an easy to apply and computationally cheap test with provable performance guarantees. In our extensive comparative studies we demonstrate that it surpasses all the other commonly exploited ellipticity tests by a large margin when it comes to their statistical power. In addition, based on the exchangeable random variables calculus introduced by de Finetti, we develop a natural mathematical framework enabling rigorous analysis of our test and numerous other tests and estimators based on dependent but exchangeable sample measurements. 

\appendix

\begin{proof}[Proof of Lemma \ref{lem:SLLN}]
Our proof is based on an analogous result in \cite{taylor1985strong}. Both Lemmas 1 and 2 from \cite{taylor1985strong} can be easily restated for our setup after replacing the Banach space $E$ by $\mathbb{R}$ and linear functionals by scalar multiplication. In addition, note that our condition 1) immediately implies requirement (2.5) from \cite{taylor1985strong}. Now, the reasoning from the proof of Theorem 1 from \cite{taylor1985strong} applies verbatim.
\end{proof}

\begin{proof}[Proof of Lemma \ref{lem:slln_mean}]
As shown in Theorem 3.1 from \cite{tyler1987distribution},
\begin{equation}
\T_n \xrightarrow{a.s.} \bm\Omega \succ 0,\;\; n\to\infty,
\end{equation}
therefore, starting from some $n_0,\; \T_n$ is almost surely invertible for $n\geqslant n_0$ and
\begin{equation}
\T_n^{-1/2}\bm\Omega^{1/2}\; \xrightarrow{a.s.}\; \I_p,\;\; n\to\infty.
\end{equation}
Now the claim follows from the definition of the sequence $\{\t_{ni}\}_n$,
\begin{equation*}
\t_{ni} = \frac{\T_n^{-1/2}\x_i}{\norm{\T_n^{-1/2}\x_i}} = \frac{\T_n^{-1/2}\bm\Omega^{1/2}\w_i}{\norm{\T_n^{-1/2}\bm\Omega^{1/2}\w_i}} \xrightarrow{a.s.} \w_i,\;\; n\to\infty.
\end{equation*}
\end{proof}

\begin{proof}[Proof of Proposition \ref{prop:main_prop}]
As above, we can equivalently rewrite $\t_{ni}$ as
\begin{equation*}
\t_{ni} = \frac{\T_n^{-1/2}\bm\Omega^{1/2}\w_i}{\norm{\T_n^{-1/2}\bm\Omega^{1/2}\w_i}},\quad i=1,\dots,n,\;\; n=p+1,\dots,
\end{equation*}
which is just a useful representation as clearly $\bm\Omega$ is not revealed to the researcher. Fix a vector $\a \in \mathbb{R}^p$ of unit norm $\norm{\a}=1$ and consider the following triangular array of row-wise exchangeable random variables
\begin{equation}
X_{ni} = \sqrt{p}\cdot\a^\top \t_{ni},\quad i=1,\dots,n,\;\; n=p+1,\dots.
\end{equation}
Let us study the properties of $\{X_{ni}\}_{n=p+1,i=1}^{\infty,n}$. First, consider
\begin{equation}
\mathbb{E}\[X_{n1}X_{n2}\] = p\,\a^\top \mathbb{E}\[\t_{n1}\t_{n2}^\top\]\a,
\end{equation}
Lemma \ref{lem:slln_mean} implies that
\begin{equation}
\mathbb{E}\[\t_{n1}\t_{n2}^\top\] \to \mathbb{E}\[\w_1\w_2^\top\] = 0,\quad n \to \infty,
\end{equation}
therefore,
\begin{equation}
\mathbb{E}\[X_{n1}X_{n2}\] \to 0,\quad n \to \infty.
\end{equation}
Next, note that
\begin{equation}
\frac{|X_{ni}|}{\sqrt{n}} = \sqrt{p}\frac{|\a^\top\t_{ni}|}{\sqrt{n}} \leqslant \sqrt{p}\frac{\norm{\a}\norm{\t_{ni}}}{\sqrt{n}} = \sqrt{\frac{p}{n}} \to 0.
\end{equation}
Finally, let us show that
\begin{equation}
\frac{1}{k_n}\sum_{i=1}^{k_n} X_{ni}^2 \xrightarrow{P} 1,\quad n \to \infty.
\end{equation}
Indeed,
\begin{equation*}
\frac{1}{k_n}\sum_{i=1}^{k_n} X_{ni}^2 = p\frac{1}{k_n}\sum_{i=1}^{k_n} \a^\top\t_{ni}\t_{ni}^\top\a = p\,\a^\top\[\frac{1}{k_n}\sum_{i=1}^{k_n}\t_{ni}\t_{ni}^\top\]\a.
\end{equation*}
For the sample covariance we obtain,
\begin{align}
&\frac{1}{k_n}\sum_{i=1}^{k_n}\t_{ni}\t_{ni}^\top = \frac{1}{k_n}\sum_{i=1}^{k_n}\frac{\T_n^{-1/2}\bm\Omega^{1/2}\w_i\w_i^\top\bm\Omega^{1/2}\T_n^{-1/2}}{\norm{\T_n^{-1/2}\bm\Omega^{1/2}\w_i}^2} \nonumber \\ 
&\quad = 
\T_n^{-1/2}\bm\Omega^{1/2}\[\frac{1}{k_n}\sum_{i=1}^{k_n} \frac{\w_i\w_i^\top}{\norm{\T_n^{-1/2}\bm\Omega^{1/2}\w_i}^2}\]\bm\Omega^{1/2}\T_n^{-1/2} \nonumber \\
&\qquad \xrightarrow{a.s.} \frac{1}{p}\I,\quad n \to \infty, 
\end{align}
and therefore,
\begin{equation}
\frac{1}{k_n}\sum_{i=1}^{k_n} X_{ni}^2 \;\xrightarrow{P}\; \a^\top\I\,\a = \norm{\a}^2 = 1,\quad n \to \infty.
\end{equation}
The rest of the proof is based on the argument proposed in \cite{soloveychik2020central}. Assume without loss of generality that $n$ is an even number and set
\begin{equation}
k_n = \frac{n}{2}.
\end{equation}
Consider now the following sequence,
\begin{equation}
Y_{ni} = \begin{cases} X_{ni}, & i\leqslant k_n, \\ -X_{ni}, & i> k_n. \end{cases}
\end{equation}
Clearly, the new sequence is exchangeable with the same joint distribution as the original sequence. Indeed, under $\mathcal{H}_0$ the joint distribution of $\{\t_{ni}\}$ is invariant under multiplication of any of the random vectors by $-1$ and $\T_n$ is an even function of $\t_{ni}$. Now all the conditions of Lemma \ref{eq:martin_theorem} are satisfied for the new sequence $\{Y_{ni}\}$ and we obtain,
\begin{equation}
\sqrt{\frac{n}{2}}\[\frac{2}{n}\sum_{i=1}^{n/2}Y_{ni}-\frac{1}{n}\sum_{i=1}^{n}Y_{ni}\] \xrightarrow{L} \mathcal{N}\(0,\frac{1}{2}\),\;\; n \to \infty.
\end{equation}
Note that
\begin{align}
&\frac{2}{n}\sum_{i=1}^{n/2}Y_{ni}-\frac{1}{n}\sum_{i=1}^{n}Y_{ni} \\
&\qquad= \frac{2}{n}\sum_{i=1}^{n/2}X_{ni}-\frac{1}{n}\[\sum_{i=1}^{n/2}X_{ni} - \sum_{i=n/2+1}^{n}X_{ni}\] \nonumber \\ 
&\qquad= \frac{1}{n}\sum_{i=1}^{n/2}X_{ni} + \frac{1}{n} \sum_{i=n/2+1}^{n}X_{ni} = \frac{1}{n}\sum_{i=1}^{n}X_{ni}, \nonumber
\end{align}
to obtain
\begin{equation}
\sqrt{\frac{n}{2}}\cdot\frac{1}{n}\sum_{i=1}^{n}X_{ni} \xrightarrow{L} \mathcal{N}\(0,\frac{1}{2}\),\;\; n \to \infty,
\end{equation}
or
\begin{equation}
\frac{1}{\sqrt{n}}\sum_{i=1}^{n}X_{ni} \xrightarrow{L} \mathcal{N}\(0,1\),\;\; n \to \infty.
\end{equation}
By the definition of $X_{ni}$ we get
\begin{equation}
\sqrt{p}\cdot\a^\top \frac{1}{\sqrt{n}}\sum_{i=1}^{n}\t_{ni} \xrightarrow{L} \mathcal{N}\(0,1\),\;\; n \to \infty.
\end{equation}
Finally, recall that the vector $\a$ was chosen arbitrarily to conclude the proof.
\end{proof}

For a sequence of vectors $\{\y_i\}_{i=1}^n$, denote their sample covariance by
\begin{equation}
\S_{\y,n} = \frac{1}{n}\sum_{i=1}^n\y_i\y_i^\top.
\end{equation}

\begin{lemma}[Theorem 6.2 from \cite{benedetto2003finite}]
\label{lem:frame_lemma}
Let $\{\y_i\}_{i=1}^n \subset \mathbb{S}^{p-1}$ be a set of $n\geqslant p$ vectors, then
\begin{equation}
\Tr{\S_{\y,n}^2} \geqslant \frac{1}{p}.
\end{equation}
\end{lemma}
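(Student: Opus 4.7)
The plan is to reduce the claim to a one-line convexity bound on the eigenvalues of $\S_{\y,n}$, using only two facts about this matrix: its trace equals $1$ and it is positive semidefinite. I would first compute $\Tr{\S_{\y,n}}$ directly from the definition: each summand $\y_i\y_i^\top$ satisfies $\Tr{\y_i\y_i^\top} = \norm{\y_i}^2 = 1$ because $\y_i \in \mathbb{S}^{p-1}$, and the $1/n$ prefactor then yields $\Tr{\S_{\y,n}} = 1$, independently of the relation between $n$ and $p$.

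Next, $\S_{\y,n}$ is a nonnegative combination of rank-one positive semidefinite matrices and therefore is itself PSD, so its eigenvalues $\lambda_1,\dots,\lambda_p$ are nonnegative. The trace identity above reads $\sum_{k=1}^p \lambda_k = 1$, while the spectral decomposition gives $\Tr{\S_{\y,n}^2} = \sum_{k=1}^p \lambda_k^2$. Applying the Cauchy--Schwarz inequality (equivalently, the quadratic--arithmetic mean inequality) to the vector $(\lambda_1,\dots,\lambda_p)$ and the all-ones vector of length $p$ then yields
\begin{equation}
\sum_{k=1}^p \lambda_k^2 \;\geqslant\; \frac{1}{p}\(\sum_{k=1}^p \lambda_k\)^2 \;=\; \frac{1}{p},
\end{equation}
which is precisely the desired bound, with equality iff $\S_{\y,n} = \frac{1}{p}\I$.

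There is no genuine obstacle here; the argument reduces to reading off the trace normalization imposed by the unit-norm hypothesis and invoking a standard eigenvalue inequality. I note in passing that the assumption $n \geqslant p$ plays no role in establishing the lower bound itself; it is presumably inherited from the more refined statement in \cite{benedetto2003finite}, where the equality case characterizes tight frames in $\mathbb{R}^p$, which cannot be realized with fewer than $p$ vectors.
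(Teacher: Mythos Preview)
Your argument is correct: the trace constraint $\Tr{\S_{\y,n}}=1$ follows immediately from $\norm{\y_i}=1$, and the Cauchy--Schwarz (or power-mean) inequality on the eigenvalues gives the bound $\Tr{\S_{\y,n}^2}\geqslant 1/p$, with equality precisely when $\S_{\y,n}=\frac{1}{p}\I$. Your remark about the role of the hypothesis $n\geqslant p$ is also accurate.

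As for comparison: the paper does not actually supply a proof of this lemma. It is stated with attribution to Theorem~6.2 of \cite{benedetto2003finite} and invoked as a black box in the proof of Proposition~\ref{prop:gine_exch}. Your self-contained derivation is the standard elementary route and is, if anything, more transparent than deferring to the frame-theoretic literature, since the paper only needs the scalar inequality and never uses the structural characterization of tight frames that the cited reference is really about.
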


\begin{proof}[Proof of Proposition \ref{prop:gine_exch}]
The difference in the behavior of the Ajne and Gin{\'e} statistics stems from the fact that the former is a sum of Gegenbauer polynomials of odd orders involving only monomials of odd powers, while the latter reads as a sum of Gegenbauer polynomials of even orders involving only monomials of even powers \cite{gine1975invariant, prentice1978invariant}. Next we explain this in more detail. 

Gegenbauer (ultraspherical) polynomial \cite{reimer2012multivariate} of index $\alpha$ and order $q \geqslant 2$ is defined as
\begin{equation}
\label{eq:gene_def_series}
C_{q}^\alpha(z) = \sum_{k=0}^{\floor{q/2}} (-1)^k\frac{\Gamma(q-k+\alpha)}{\Gamma(\alpha)k!(q-2k)!}(2z)^{q-2k}.
\end{equation}
Note that the Gegenbauer polynomials of odd/even order involves monomials of only odd/even order, respectively.

In order to analyze the Gin{\'e} statistic (\ref{eq:gine_stat_def}), we use the following expansion of $\sin\theta$ into Gegenbauer polynomials in $\cos\theta$ from \cite{prentice1978invariant},
\begin{align*}
&\frac{1}{2}-\frac{p-1}{2}\(\frac{\Gamma\(\alpha+\frac{1}{2}\)}{\Gamma(\alpha+1)}\)^2\sin\theta \\
&\qquad= \sum_{q=1}^\infty \frac{(p-1)(2q-1)(4q+p-2)}{(p-2)(2q+p-1)8\pi} \\
&\qquad\qquad\times\(\frac{\Gamma\(\alpha+\frac{1}{2}\)\Gamma\(q-\frac{1}{2}\)}{\Gamma(q+\alpha+\frac{1}{2})}\)^2C_{2q}^\alpha(\cos\theta),
\end{align*}
where we remind the reader that
\begin{equation}
\alpha = \frac{p}{2}-1.
\end{equation}
Using (\ref{eq:gene_def_series}) we can write,
\begin{align}
\label{eq:sin_decomp}
&\frac{1}{2}-\frac{p-1}{2}\(\frac{\Gamma\(\alpha+\frac{1}{2}\)}{\Gamma(\alpha+1)}\)^2\sin\theta = \sum_{r=0}^\infty \gamma_{2r}(\alpha, p)\cos^{2r}\theta \nonumber \\
&\qquad = \gamma_0(\alpha, p) + \gamma_2(\alpha, p)\cos^2\theta + \sum_{r=2}^\infty \gamma_{2r}(\alpha, p)\cos^{2r}\theta. \nonumber
\end{align}
Below we use the explicit form of $\gamma_2(\alpha, p)$,
\begin{align}
\gamma_2(\alpha, p) &= \sum_{q=1}^\infty \frac{(p-1)(2q-1)(4q+p-2)}{(p-2)(2q+p-1)8\pi} \nonumber \\
&\qquad\qquad\times\(\frac{\Gamma\(\alpha+\frac{1}{2}\)\Gamma\(q-\frac{1}{2}\)}{\Gamma\(q+\alpha+\frac{1}{2}\)}\)^2\zeta_{2q,2}^{\alpha},
\end{align}
where
\begin{equation}
\zeta_{2q,2}^{\alpha} = 2(-1)^{q-1}\frac{\Gamma(q+1+\alpha)}{\Gamma(\alpha)(q-1)!},
\end{equation}
is the weight of $z^2$ in $C_{2q}^\alpha(z)$ defined as in (\ref{eq:gene_def_series}). Thus, we obtain
\begin{align}
\gamma_2(\alpha, p) &= \sum_{q=1}^\infty (-1)^{q-1}\frac{(p-1)(2q-1)(4q+p-2)}{(p-2)(2q+p-1)4\pi} \\
&\qquad\times\(\frac{\Gamma\(\frac{p}{2}-\frac{1}{2}\)\Gamma\(q-\frac{1}{2}\)}{\Gamma\(q+\frac{p}{2}-\frac{1}{2}\)}\)^2\frac{\Gamma\(q+\frac{p}{2}\)}{\Gamma\(\frac{p}{2}-1\)(q-1)!}. \nonumber
\end{align}
Since the last series is telescopic we conclude that in particular
\begin{equation}
\gamma_2(\alpha, p) > 0.
\end{equation}
Let us compute the first few terms of this series,
\begin{align}
& \gamma_2(\alpha, p) \\
& \;\;= \frac{p(p+2)}{4(p+1)(p-1)} - \frac{3p(p+2)(p+6)}{8(p-1)(p+1)^2(p+3)} + O\(\frac{1}{p^2}\) \nonumber \\
& \;\;= \frac{1}{4} + \frac{1}{8p} + O\(\frac{1}{p^2}\). \nonumber
\end{align}
Recall that due to (\ref{eq:psi_n_not}),
\begin{equation}
\cos(\psi_{n,ij}) = \t_{ni}^\top\t_{nj},
\end{equation}
and therefore,
\begin{equation}
\sin(\psi_{n,ij}) = \sin(\psi_{n,ji}),
\end{equation}
together with
\begin{equation}
\sin(\psi_{n,ii}) = 0.
\end{equation}
Now we can see that the Gin{\'e} statistic reads as
\begin{align}
\label{eq:t_g_der_sum}
& t_{G}\(\{\t_{ni}\}\) = \frac{n}{2}-\frac{p-1}{2n}\(\frac{\Gamma\(\alpha+\frac{1}{2}\)}{\Gamma(\alpha+1)}\)^2\sum_{i<j}\sin(\psi_{n,ij}) \nonumber \\
& = \frac{n}{2} - \frac{p-1}{4n}\(\frac{\Gamma\(\alpha+\frac{1}{2}\)}{\Gamma(\alpha+1)}\)^2\sum_{i,j=1}^n\sin(\psi_{n,ij}) \nonumber \\
& = \frac{n}{2} - \frac{1}{2n}\[\frac{p-1}{2}\(\frac{\Gamma\(\alpha+\frac{1}{2}\)}{\Gamma(\alpha+1)}\)^2\sum_{i,j=1}^n\sin(\psi_{n,ij})\] \nonumber \\
& = \frac{n}{2} + \frac{1}{2n}\sum_{i,j=1}^n\Bigg[\gamma_0(\alpha, p) + \gamma_2(\alpha, p)\cos^2\theta \nonumber \\
& \qquad\qquad\qquad\qquad \left. + \sum_{r=2}^\infty \gamma_{2r}(\alpha, p)\cos^{2r}\theta - \frac{1}{2}\] \nonumber \\
& = \frac{n}{4} + \frac{n\gamma_0(\alpha, p)}{2} + \frac{1}{2n}\sum_{i,j=1}^n\Bigg[\gamma_2(\alpha, p)\(\t_{ni}^\top\t_{nj}\)^2 \nonumber \\
& \qquad\qquad\qquad\qquad \left.+ \sum_{r=2}^\infty \gamma_{2r}(\alpha, p)\(\t_{ni}^\top\t_{nj}\)^{2r}\].
\end{align}
Note that
\begin{align}
\sum_{i,j=1}^n\(\t_{ni}^\top\t_{nj}\)^2 &= \sum_{k,l=1}^p \[\sum_{i=1}^n \t_{ni}^{(k)} \t_{ni}^{(l)}\]^2 \\
&= \Tr{\[n \S_{\t,n}\]^2} = n^2\Tr{\S_{\t,n}^2}, \nonumber
\end{align}
where we denote
\begin{equation}
\t = \begin{bmatrix} \t^{(1)} \\ \vdots \\ \t^{(p)} \end{bmatrix}.
\end{equation}
In our setup, the sample covariance matrix satisfies the following relation,
\begin{align}
\S_{\t,n} &= \frac{1}{n}\sum_{i=1}^n\t_{ni}\t_{ni}^\top = \frac{1}{n}\sum_{i=1}^n\frac{\T_n^{-1/2}\x_i\x_i^\top\T_n^{-1/2}}{\norm{\T_n^{-1/2}\x_i}^2} \\ 
&= \frac{1}{n}\T_n^{-1/2}\sum_{i=1}^n\frac{\x_i\x_i^\top}{\x_i^\top\T_n^{-1}\x_i}\T_n^{-1/2} = \frac{1}{p}\I, \nonumber
\end{align}
and therefore,
\begin{equation}
\Tr{\S_{\t,n}^2} = \frac{1}{p^2}\Tr{\I} = \frac{1}{p}.
\end{equation}
By Lemma \ref{lem:frame_lemma}, for $\{\w_i\}_{i=1}^n$ i.i.d. uniformly distributed over $\mathbb{S}^{p-1}$ with $n\geqslant p$,
\begin{equation}
\Tr{\S_{\w,n}^2} \geqslant \frac{1}{p} = \Tr{\S_{\t,n}^2}.
\end{equation}
Since $\gamma_2(\alpha, p) > 0$, from (\ref{eq:t_g_der_sum}) we infer that $t_{G}\(\{\w_i\}\)$ first-order stochastically dominates $t_{G}\(\{\t_{ni}\}\)$.

Recall that the limiting distribution of the spectrum of $p\,\S_{\w,n}$ is given by the Marchenko-Pastur law \cite{marcenko1967distribution} whose second moment gives us the following asymptotic equivalence,
\begin{equation}
\mathbb{E}\[\Tr{\[p\,\S_{\w, n}\]^2}\] \sim p + \frac{p^2}{n}, \;\; n\to\infty.
\end{equation}
As a consequence,
\begin{multline*}
\frac{1}{2n}\(n^2\mathbb{E}\[\Tr{\S_{\w,n}^2}\] - n^2\mathbb{E}\[\Tr{\S_{\t,n}^2}\]\) \\
\sim \frac{n^2}{2n}\(\frac{1}{p} + \frac{1}{n} - \frac{1}{p}\) \sim \frac{1}{2},\;\; n\to\infty,
\end{multline*}
and from (\ref{eq:t_g_der_sum}) we conclude,
\begin{multline*}
\mathbb{E}\[t_{G}\(\{\w_i\}\)\] - \mathbb{E}\[t_{G}\(\{\t_{ni}\}\)\] \sim \frac{\gamma_2(\alpha, p)}{2} \\
\sim \frac{1}{8} + \frac{1}{16p} + O\(\frac{1}{p^2}\), \;\; n\to \infty. 
\end{multline*}
\end{proof}

\bibliographystyle{IEEEtran}
\bibliography{ilya_bib}

\end{document}